\begin{document}
	
	\title{Electrical networks and hyperplane arrangements}
	
	\author{Bob Lutz}
	\address{Department of Mathematics, University of Michigan, Ann Arbor, MI, USA}
	\email{boblutz@umich.edu}
	\thanks{Work of the author was partially supported by NSF grants DMS-1401224 and DMS-1701576.}
	
	\date{\today}

	\subjclass[2010]{52C35 (Primary) 34B45, 05C15 (Secondary)}

\begin{abstract}
			This paper studies \emph{Dirichlet arrangements}, a generalization of graphic hyperplane arrangements arising from electrical networks and order polytopes of finite posets. We generalize descriptions of combinatorial features of graphic arrangements to Dirichlet arrangements, including characteristic polynomials and supersolvability. We apply these results to visibility sets of order polytopes and fixed-energy harmonic functions on electrical networks.
\end{abstract}

\maketitle

\section{Introduction}
\label{sec:intro}

The \emph{graphic arrangement} associated to a graph $\g=(V,E)$ is the set $\Ag$ of hyperplanes in $\R^V$ given by $x_i=x_j$ for all $ij\in E$. Graphic arrangements are fundamental in the study of hyperplane arrangements due to the relative ease of translating combinatorial and topological data from $\Ag$, often intractable for general arrangements, into graph-theoretic terms.

This paper studies \emph{Dirichlet arrangements}, a generalization of graphic arrangements arising from electrical networks and order polytopes of finite posets.
Let $\g$ be a finite connected undirected graph with no loops or multiple edges. Let $\B\subset V$ be a set of $\geq 2$ vertices called \emph{boundary nodes}, no two of which are adjacent. Let $ u:\B\to \R$ be injective. We think of $\g$ as a network of linear resistors with voltages $ u$ imposed on the boundary nodes. The \emph{Dirichlet arrangement} $\Au$ is the set of intersections of hyperplanes in the graphic arrangement $\Ag$ with the affine subspace
\begin{equation}
\{x\in \R^V:x_j =  u(j)\mbox{ for all }j\in \B\}.
\label{eq:affsub}
\end{equation}
The Dirichlet arrangement $\Au$ is not a genuine restriction of $\Ag$, since \eqref{eq:affsub} is not an intersection of elements of $\Ag$. 
However we show that $\Au$ preserves a good deal of graphic structure.

\begin{thm}
	Let $\mg$ be the graph obtained from $\g$ by adding an edge between each pair of boundary nodes. The following hold:
	\begin{enumerate}[(i)]
		\item The intersection poset $L(\Au)$ is the order ideal of $L(\Ag)$ consisting of all boundary-separating connected partitions of $\g$
		\item The characteristic polynomial of $\Au$ is the quotient of the chromatic polynomial of $\mg$ by a falling factorial
		\item The bounded chambers of $\Au$ correspond to the possible orientations of current flow through $\g$ respecting the voltages $u$ and in which the current flowing through each edge is nonzero.
	\end{enumerate}
	\label{thm:maincomb}
\end{thm}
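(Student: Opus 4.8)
The plan is to handle the three parts in turn, resting on the standard identification of $L(\Ag)$ with the bond lattice of $\g$: a connected partition $\pi$ of $V$ corresponds to the flat $X_\pi=\{x:x_i=x_{i'}\text{ whenever }i,i'\text{ share a block}\}$, ordered by refinement. Throughout, let $W$ denote the affine subspace \eqref{eq:affsub}.

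For (i), I would first observe that $X_\pi\cap W$ is nonempty exactly when $\pi$ is boundary-separating: two boundary nodes in one block would force their prescribed voltages to agree, impossible since $u$ is injective, while if each block meets $\B$ at most once one may set the boundary coordinates and choose the remaining ones freely. Any flat of $\Au$ equals $\bigcap_{ij\in S}(H_{ij}\cap W)=X_{\pi(S)}\cap W$, where $\pi(S)$ records the connected components of $(V,S)$, so $\pi\mapsto X_\pi\cap W$ is a surjection onto $L(\Au)$ defined precisely on the boundary-separating partitions. That it is an order isomorphism follows because $X_\pi\subseteq X_{\pi'}\Rightarrow X_\pi\cap W\subseteq X_{\pi'}\cap W$ is immediate, and conversely, if $\pi\not\geq\pi'$ then some pair $i,i'$ is identified by $\pi'$ but lies in distinct $\pi$-blocks; since each $\pi$-block has at most one boundary node (hence a free interior coordinate, or two distinct boundary values), a generic point of $X_\pi\cap W$ separates $i$ and $i'$ and so escapes $X_{\pi'}\cap W$. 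Finally, boundary-separating partitions form an order ideal, as refining keeps each block inside an old one.

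For (ii) I would combine (i) with the finite-field method. Both $L(\Au)$ and the dimension function $\dim(X_\pi\cap W)=c(\pi)-|\B|$ (the $|\B|$ blocks meeting $\B$ are pinned, the rest free) depend on $u$ only through injectivity, so I may assume $u$ is integer-valued. For a large prime $q$ the reduction preserves the intersection poset, whence $\chi_{\Au}(q)$ counts the $x:V\to\mathbb{F}_q$ with $x|_\B=u$ and $x_i\neq x_j$ on every edge, i.e. the proper $q$-colorings of $\g$ extending the fixed injective boundary coloring. On the other hand a proper $q$-coloring of $\mg$ is a proper $q$-coloring of $\g$ whose boundary colors are pairwise distinct, and since any two injective colorings of $\B$ differ by a permutation of the $q$ colors, each of the $(q)_{|\B|}$ injective boundary colorings has the same number of extensions. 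Hence $\chi_{\mg}(q)=(q)_{|\B|}\,\chi_{\Au}(q)$ for all large $q$, and equality of polynomials gives $\chi_{\Au}(t)=\chi_{\mg}(t)/(t)_{|\B|}$.

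For (iii), a point $x$ of a chamber orients each edge $ij$ by the sign of $x_i-x_j$, the direction of current under Ohm's law; this orientation is acyclic, is constant on the (convex) chamber, and conversely cuts out that chamber, so chambers correspond bijectively to the acyclic orientations realized by some voltage vector extending $u$ with nonzero current on every edge. A chamber is bounded iff its recession cone $\{v:v|_\B=0,\ v_i\geq v_j\text{ along each arrow }i\to j\}$ is trivial; tracing a maximizer of such $v$ backward against the arrows reaches a source and a minimizer forward reaches a sink, so if all sources and sinks are boundary nodes these extrema vanish and $v\equiv 0$, whereas an interior source or sink supports a nonzero recession vector. Thus bounded chambers are exactly the realizable acyclic orientations whose sources and sinks all lie on $\B$, namely those in which current enters and leaves only at the boundary and passes through every interior node. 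I expect this boundedness analysis to be the main obstacle—both in isolating the intrinsic description of ``current flow respecting $u$'' and in the recession-cone argument—while the lattice interchange in (ii) is routine once the finite-field count is in place and (i) is essentially bookkeeping.
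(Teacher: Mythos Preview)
Your proof is correct and mirrors the paper's approach in all three parts: the bond-lattice identification in (i), the finite-field method combined with a symmetry count over injective boundary colorings in (ii), and the sign map to acyclic orientations together with a boundedness analysis in (iii). Your recession-cone formulation in (iii) is a tidy repackaging of the paper's direct ray argument; the one step the paper spells out that you leave implicit is that \emph{every} acyclic orientation respecting $u$ is realized by some chamber (done there by extending the induced partial order to a linear order compatible with $u$), which is what upgrades your description ``realizable with no interior sources or sinks'' to the intrinsic class of compatible orientations the paper denotes $\com$.
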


Each part of Theorem \ref{thm:maincomb} generalizes a key theorem on graphic arrangements. As corollaries, we obtain a formula for the number of orientations in part (iii), and we show that the coefficients of a chromatic polynomial remain log-concave after ``modding out'' by a clique of the graph.

We also characterize supersolvable Dirichlet arrangements. 
Stanley \cite{stanley1972} showed that the graphic arrangement $\Ag$ is supersolvable if and only if the graph $\g$ is chordal (or triangulated). We prove the following theorem, building on results of \cite{mu2015,stanley2015,suyama2018}.

\begin{thm}
	The Dirichlet arrangement $\Au$ is supersolvable if and only if the graph $\mg$ from Theorem \ref{thm:maincomb} is chordal.
	\label{thm:supintro}
\end{thm}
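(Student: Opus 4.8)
The plan is to reduce the statement to a property of the intersection lattice and then transport it to the graphic arrangement of $\mg$, where Stanley's criterion \cite{stanley1972} applies. Since supersolvability depends only on the (semi)lattice of flats, by Theorem \ref{thm:maincomb}(i) I would work with $L(\Au)$, the poset of boundary-separating connected partitions of $\g$. The first step is to re-express this poset in terms of $\mg$: if a partition places at most one boundary node in each block, then a block is connected in $\g$ if and only if it is connected in $\mg$, since the only edges of $\mg$ not in $\g$ join two boundary nodes. Hence $L(\Au)$ is exactly the order ideal of $L(\mathcal{A}_{\mg})$, the lattice of connected partitions of $\mg$, consisting of the boundary-separating partitions. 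Equivalently, writing $W$ for the affine subspace \eqref{eq:affsub}, one checks that $\Au$ is the restriction $(\mathcal{A}_{\mg})|_W$: the hyperplanes $x_i=x_{i'}$ with $i,i'\in\B$ restrict to the empty set on $W$ because $u$ is injective, while the remaining hyperplanes of $\mathcal{A}_{\mg}$ meet $W$ in the hyperplanes of $\Au$. Thus $W$ is a translate of a coordinate subspace that is generic with respect to the boundary clique, and $\Au$ is its restriction.

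For the direction ``$\mg$ chordal $\Rightarrow$ $\Au$ supersolvable,'' I would build a maximal chain of modular flats in $L(\Au)$ from a perfect elimination ordering of $\mg$, mimicking Stanley's argument for graphic arrangements. Because the boundary nodes form a clique in $\mg$, I would arrange the ordering so as to eliminate the interior vertices $V\setminus\B$ first; eliminating a simplicial interior vertex produces a modular coatom of the current lattice exactly as in the chordal graphic case, with one new feature. When the eliminated vertex $j$ is adjacent in $\g$ to several boundary nodes, the corresponding hyperplanes $x_j=u(i)$ form a parallel family rather than a pencil through a common flat. Controlling these parallel families is precisely the phenomenon analyzed in \cite{mu2015,stanley2015,suyama2018}, and I would invoke their description of the modular elements of such threshold-type arrangements to verify modularity of each coatom; the clique structure on $\B$ is what guarantees the chain closes up at the top.

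The harder direction, and what I expect to be the main obstacle, is ``$\Au$ supersolvable $\Rightarrow$ $\mg$ chordal.'' Here supersolvability of the \emph{proper} order ideal $L(\Au)$ must be leveraged to force chordality of the larger graph $\mg$, and in general supersolvability of an order ideal need not propagate to the ambient lattice. I would argue by contrapositive: assuming $\mg$ has a chordless cycle of length $\geq 4$, I would exhibit a sub-configuration of $\Au$ with no modular coatom, obstructing supersolvability. Using the identification $\Au=(\mathcal{A}_{\mg})|_W$, an induced $4$-cycle of $\mg$ restricts to three lines of $\Au$ in general position, with no common point, which is exactly the non-supersolvable configuration one computes directly for a path with two interior vertices. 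The essential point is that the generic translate $W$ preserves the chordless obstruction rather than resolving it. The care needed is that the cycle may pass through boundary nodes, where the relevant hyperplanes become parallel; I would handle this uniformly through the restriction picture, which converts clique edges of $\mg$ on $\B$ into the genericity of $W$ and keeps induced non-chordal cycles visible as non-modular sublattices of $L(\Au)$.

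Finally, I would assemble the two directions to obtain the equivalence $\Au$ supersolvable $\iff$ $\mg$ chordal. The argument runs in parallel to Stanley's chordal-versus-supersolvable dichotomy for $\mathcal{A}_{\mg}$, which it refines to the generic restriction $\Au$. The technical heart is the forward direction together with the modularity bookkeeping for the parallel families introduced by the boundary voltages, for which the results of \cite{mu2015,stanley2015,suyama2018} provide the backbone.
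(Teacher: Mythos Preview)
Your forward direction is on the right track and close to the paper's. Both eliminate interior vertices first in a perfect elimination ordering of $\mg$ (possible because $\B$ is a clique in $\mg$, via Rose's theorem). The paper, however, does not build a modular chain by hand. It rewrites $\Au$ as the $\psi$-graphical arrangement $\AA_{\gint,\pint}$, where $\gint$ is the subgraph induced on $\inte$ and $\pint(i)=\{u(j):j\in\B,\ j\sim i\}$, and then invokes Suyama--Tsujie \cite[Theorem~2.2]{suyama2018}: $\AA_{\gint,\pint}$ is supersolvable (equivalently free) if and only if $(\gint,\pint)$ admits a \emph{weighted elimination ordering}, i.e.\ a PEO $i_1,\ldots,i_n$ of $\gint$ with $\pint(i_r)\subset\pint(i_s)$ whenever $i_r\sim i_s$ and $r<s$. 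The paper's entire contribution is then the short combinatorial equivalence ``$(\gint,\pint)$ has a WEO $\Leftrightarrow$ $\mg$ is chordal,'' proved by appending (resp.\ stripping) the clique $\B$ at the end of an ordering. Both directions are handled at once by this black box, so no obstruction argument is needed.

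Your backward direction has a genuine gap. The assertion ``an induced $4$-cycle of $\mg$ restricts to three lines of $\Au$ in general position'' holds only in the path example, where the cycle uses the boundary--boundary edge that vanishes on $W$. A chordless $4$-cycle with all four vertices interior gives \emph{four} hyperplanes (the cycle matroid of $C_4$, rank $3$), and a chordless $\ell$-cycle gives $\ell$ or $\ell-1$ hyperplanes depending on whether it contains a boundary edge. More seriously, finding a non-supersolvable \emph{sub-arrangement} does not obstruct supersolvability of $\Au$: supersolvability is inherited by localizations $\AA_X$ for $X\in L(c\AA)$, not by arbitrary sub-arrangements (e.g.\ the supersolvable braid arrangement $\AA_{K_4}$ contains the non-supersolvable $\AA_{C_4}$). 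To run a localization argument you would need the flat ``all cycle vertices equal,'' but when the chordless cycle in $\mg$ passes through two boundary nodes this flat does not lie in $L(\Au)$, and the argument breaks down precisely in the case you flag as delicate. Repairing this would force you to characterize the modular coatoms of $L(c\Au)$ directly, which is essentially the content of the Suyama--Tsujie theorem; the paper's route through the WEO criterion is both shorter and avoids this difficulty.
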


This answers a question posed by Stanley \cite{stanleytalk}, who asked if there is a characterization of supersolvable arrangements $\Au$ analogous to the graphic case (see Section \ref{sec:super}).

For an application of our results, let $P$ be a finite poset and $O$ the convex polytope in $\R^P$ of all order-preserving functions $P\to [0,1]$. Here $O$ is called the \emph{order polytope} of $P$ \cite{stanley1986}. 
Consider the sets of facets of $O$ visible from different points in $\R^P$, called \emph{visibility sets} of $O$. In general not all visibility sets of $O$ are visible from far away, since certain obstructions are eliminated by viewing $O$ ``from infinity.'' Write $\alpha(\g)$ and $\beta(\g)$ for the number of acyclic orientations and the beta invariant, resp., of $\g$.

\begin{cor}
	Let $O$ be the order polytope of a finite poset. There is a graph $\Delta$ such that $O$ has exactly $\frac{1}{2}\alpha(\Delta)$ visibility sets, of which exactly $\frac{1}{2}\alpha(\Delta)-\beta(\Delta)$ are visible from far away.
	\label{cor:vis}
\end{cor}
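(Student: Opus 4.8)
The plan is to realize the facet-supporting hyperplanes of $O$ as a Dirichlet arrangement and then read off both counts from Theorem~\ref{thm:maincomb}. First I would form the bounded poset $\hat P = P \cup \{\hat 0, \hat 1\}$ (assuming $P\neq\emptyset$) and let $\g$ be its Hasse diagram, with boundary nodes $\B = \{\hat 0, \hat 1\}$ and boundary data $u(\hat 0) = 0$, $u(\hat 1) = 1$; note that $\g$ is connected, that $\hat 0,\hat 1$ are non-adjacent (since some element lies strictly between them), and that $u$ is injective, so the standing hypotheses are met, and set $\Delta := \mg$. By Stanley's description of the order polytope, the facets of $O$ are in bijection with the cover relations of $\hat P$, i.e.\ with the edges of $\g$: an edge $\hat 0 p$ gives the facet $x_p = 0$, an edge $p\hat 1$ gives $x_p = 1$, and an edge $pq$ inside $P$ gives $x_p = x_q$. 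Identifying $\R^P$ with the affine subspace \eqref{eq:affsub}, these are exactly the hyperplanes obtained by restricting the graphic hyperplanes $x_i = x_j$ of $\Ag$, so the hyperplanes of $\Au$ are precisely the facet-supporting hyperplanes of $O$.

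Next I would identify visibility sets with chambers of $\Au$. Whether a facet $F$ is visible from a point $x$ depends only on which side of the supporting hyperplane of $F$ the point $x$ lies; thus the visibility set of $x$ is determined by, and within $\Au$ determines, the sign vector of $x$ relative to the facet hyperplanes. Since these hyperplanes constitute $\Au$, the map sending a chamber to the common visibility set of its points is a bijection from chambers of $\Au$ onto visibility sets of $O$ (the chamber $\mathrm{int}\,O$ corresponding to the empty set). Hence the number of visibility sets equals the number of chambers of $\Au$, which by Zaslavsky's theorem is $(-1)^{|P|}\chi_{\Au}(-1)$. Using part (ii) with $|\B|=2$, one has $\chi_{\Au}(t) = P_\Delta(t)/\bigl(t(t-1)\bigr)$, so by Stanley's acyclic-orientation formula $\alpha(\Delta) = (-1)^{|P|+2}P_\Delta(-1)$ a short computation gives $(-1)^{|P|}\chi_{\Au}(-1) = \tfrac12\alpha(\Delta)$, the claimed count.

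Finally I would argue that a visibility set is visible from far away precisely when its chamber is unbounded: points marching to infinity inside an unbounded chamber stay arbitrarily far from $O$ while keeping a fixed visibility set, whereas a bounded chamber contains no far-away points and its visibility set, being distinct from that of every other chamber, is realized nowhere far from $O$. So the number of far-away visibility sets is the number of chambers minus the number of bounded chambers. The bounded chambers are counted by $(-1)^{|P|}\chi_{\Au}(1)$; writing $P_\Delta(t) = t(t-1)R(t)$ one finds $\chi_{\Au}(1) = R(1) = P_\Delta'(1)$, and the standard identity $P_\Delta'(1) = (-1)^{|P|+2}\beta(\Delta)$ yields exactly $\beta(\Delta)$ bounded chambers, completing the count $\tfrac12\alpha(\Delta) - \beta(\Delta)$.

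I expect the main obstacle to be the two geometric identifications rather than the enumeration: verifying that $\Au$ is exactly the facet arrangement of $O$ (that every cover relation of $\hat P$ yields a genuine, distinct facet and that no spurious hyperplanes appear), and pinning down the precise meaning of ``visible from far away'' so that it matches unboundedness of chambers. The convexity argument that a bounded chamber's visibility set cannot be recovered from infinity --- equivalently, that passing to infinity only ever realizes the visibility sets of unbounded chambers --- is the delicate point, and it is exactly where the obstructions that are ``eliminated by viewing from infinity'' enter.
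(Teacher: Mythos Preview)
Your proposal is correct and follows essentially the same route as the paper: identify $\operatorname{vis}(\mathcal{O}(P))$ with a Dirichlet arrangement $\Au$ (this is the paper's Example~\ref{eg:vis}, citing Stanley), then count chambers and bounded chambers. The paper packages the Zaslavsky computation you carry out by hand into Theorem~\ref{thm:chambercount}, so your derivation of $\tfrac12\alpha(\Delta)$ and $\beta(\Delta)$ from Theorem~\ref{thm:maincomb}(ii) is just an inlined version of that result; one small slip is that the interior chamber corresponds to the set of \emph{all} facets (cf.\ the linear order polytope example), not the empty set, but this does not affect the bijection or the counts.
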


Another application involves electrical networks with fixed boundary voltages. The pair $(\g, u)$ represents such a network if we consider the edges $E$ as resistors of equal conductance. By assigning complex edge weights $\con\in \C^E$ we can represent networks involving more general electrical components, such as RLC circuits.

For generic $\con\in \C^E$ the triple $(\g, u,\con)$ determines a unique harmonic function $h:V\to \C$ extending $ u$. In this scenario the \emph{energy} dissipated by a resistor $ij\in E$ is given by
\begin{equation}
\eng_{ij}=\con_{ij}(h(i)-h(j))^2.
\end{equation}	
Given a network $(\g, u)$ and fixed energies $\eng\in \C^E$, it is natural to ask which conductances $\con\in \C^E$ produce the energies $\eng$. Abrams and Kenyon \cite{abrams2016} posed the equivalent problem of describing the set of harmonic functions associated to these $\con$, called \emph{$\eng$-harmonic functions} on $(\g, u)$.

We describe the $\eng$-harmonic functions on $(\g,u)$ as critical points of \emph{master functions} of $\Au$ in the sense of Varchenko \cite{varchenko2003}. Broadly, master functions generalize logarithmic barrier functions, and their critical points generalize analytic centers of systems of linear inequalities.

\begin{thm}
	The $\eng$-harmonic functions on $(\g, u)$ are the critical points of the master function of $\Au$ with weights $\eng$.
	\label{thm:cpchar}
\end{thm}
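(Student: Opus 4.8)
The plan is to write the master function of $\Au$ explicitly and to observe that its critical-point equations coincide, term by term, with the current-balance equations that define $\eng$-harmonicity. First I fix coordinates on the affine subspace \eqref{eq:affsub}: the free variables are $x_i$ for the interior vertices $i\in V\setminus\B$, while $x_j=u(j)$ is a constant for each boundary node $j\in\B$. Every edge $ij\in E$ contributes a hyperplane of $\Au$ cut out by $x_i-x_j$ (with the boundary coordinates read as the constants $u(j)$), so the master function with weights $\eng$ is $\Phi=\prod_{ij\in E}(x_i-x_j)^{\eng_{ij}}$, and by definition its critical points are sought in the complement $M(\Au)$, on which every factor $x_i-x_j$ is nonzero.

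Next I compute the critical-point equations. Differentiating $\log\Phi=\sum_{ij\in E}\eng_{ij}\log(x_i-x_j)$ with respect to $x_k$ for an interior vertex $k$ selects exactly the edges incident to $k$; a short check shows the choice of orientation of each edge cancels against the sign of the derivative, so that $\partial_{x_k}\log\Phi=0$ reads $\sum_{j:\,kj\in E}\eng_{kj}/(x_k-x_j)=0$.

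I then match this with $\eng$-harmonicity. Writing $h$ for the function on $V$ recorded by $x$, the energy relation $\eng_{ij}=\con_{ij}(h(i)-h(j))^2$ lets me eliminate the conductances as $\con_{ij}=\eng_{ij}/(h(i)-h(j))^2$, whereupon the current through $ij$ is $\con_{ij}(h(i)-h(j))=\eng_{ij}/(h(i)-h(j))$ and Kirchhoff's current law at each interior node $k$ becomes exactly $\sum_{j:\,kj\in E}\eng_{kj}/(h(k)-h(j))=0$. This is the same system as the critical-point equations, giving the bijection in both directions: a critical point $h\in M(\Au)$ determines conductances $\con$ for which $h$ is harmonic with energies $\eng$, and conversely any $\eng$-harmonic $h$ solves the identical system and is therefore a critical point.

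The single point requiring care --- and the conceptual content of the theorem --- is that dividing by $(h(i)-h(j))^2$ is legitimate only when $h(i)\ne h(j)$ across every edge, i.e.\ when $h$ misses all hyperplanes of $\Au$. For critical points this is free, since they lie in $M(\Au)$ by definition; conversely it is precisely the nondegeneracy implicit in the notion of an $\eng$-harmonic function, and it is what singles out $\Au$ (rather than $\Ag$ or a genuine restriction thereof) as the arrangement whose master function governs the problem. The remaining bookkeeping --- edge orientations, the fixed boundary coordinates, and the interchange of $\Phi$ with $\log\Phi$ --- is routine.
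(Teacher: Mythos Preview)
Your proposal is correct and follows essentially the same approach as the paper: both write the critical-point equations of $\Phi_{\Au}^{\eng}$ as $\sum_{j\sim i}\eng_{ij}/(x_i-x_j)=0$ and identify these, via the substitution $\con_{ij}=\eng_{ij}/(x_i-x_j)^2$, with the harmonicity equations $\sum_{j\sim i}\con_{ij}(x_i-x_j)=0$. Your discussion of the nondegeneracy condition $h(i)\neq h(j)$ along every edge is a little more explicit than the paper's version, but the underlying argument is the same.
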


Theorem \ref{thm:cpchar} connects electrical networks, a subject with a vast literature \cite{brooks1940, curtis2000, postnikov2006, seshu1961, smale1972, weyl1923}, to critical points of master functions, an active area of research with applications to Lie algebras, physics, integrable systems, and algebraic geometry \cite{cohen2011, huh2013, mukhin2005, varchenko2011, varchenko2014}. We obtain results of Abrams and Kenyon \cite{abrams2016} as corollaries of Theorem \ref{thm:cpchar}. Combining these results with Theorem \ref{thm:maincomb} yields the following.

\begin{cor}
	For generic $\eng$, the number of $\eng$-harmonic functions is $\beta(\mg)/(|\B|-2)!$.
	\label{cor:eharmcount}
\end{cor}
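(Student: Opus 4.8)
The plan is to read the count off from Theorem~\ref{thm:cpchar} together with the theory of critical points of master functions, and then to evaluate the characteristic polynomial supplied by Theorem~\ref{thm:maincomb}(ii). By Theorem~\ref{thm:cpchar}, the $\eng$-harmonic functions on $(\g,u)$ are exactly the critical points of the master function of $\Au$ with weights $\eng$, so it suffices to count those critical points for generic $\eng$. For an essential arrangement and generic weights, the Varchenko--Orlik--Terao theorem \cite{varchenko2003} guarantees that every critical point is nondegenerate and that their number equals $|\chi_{\Au}(1)|$; by Zaslavsky's theorem this is the number $(-1)^{|V|-|\B|}\chi_{\Au}(1)$ of bounded chambers of $\Au$, which by Theorem~\ref{thm:maincomb}(iii) are the admissible current orientations. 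First I would record that $\Au$ is essential: since $\g$ is connected and $u$ is injective, the normals $e_i-e_j$ to the hyperplanes $x_i=x_j$, restricted to \eqref{eq:affsub}, span, so $\Au$ has rank equal to the dimension $|V|-|\B|$ of the subspace \eqref{eq:affsub}.

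It then remains to compute $\chi_{\Au}(1)$. The boundary nodes form a clique in $\mg$, so the chromatic polynomial $P(\mg,t)$ is divisible by the falling factorial $t(t-1)\cdots(t-|\B|+1)$, and Theorem~\ref{thm:maincomb}(ii) reads
\[
\chi_{\Au}(t)=\frac{P(\mg,t)}{t(t-1)\cdots(t-|\B|+1)}.
\]
Since $\mg$ is connected we have $P(\mg,1)=0$, and the denominator also vanishes at $t=1$ through its factor $(t-1)$, so $\chi_{\Au}(1)$ must be taken as a limit. Near $t=1$ the numerator behaves like $P'(\mg,1)(t-1)$, while the product of the remaining denominator factors tends to $(-1)^{|\B|-2}(|\B|-2)!$, whence $\chi_{\Au}(1)=P'(\mg,1)\big/\big((-1)^{|\B|-2}(|\B|-2)!\big)$.

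Finally I would identify $P'(\mg,1)$ with the beta invariant. Crapo's formula $\beta(M)=(-1)^{r(M)-1}\chi_M'(1)$ for the cycle matroid, combined with $P(\mg,t)=t\,\chi_M(t)$ and $\chi_M(1)=0$, gives $P'(\mg,1)=(-1)^{|V|}\beta(\mg)$; this can be checked directly against small examples such as $K_2$ and $C_4$. Substituting and cancelling signs yields $\chi_{\Au}(1)=(-1)^{|V|-|\B|}\beta(\mg)/(|\B|-2)!$, so $|\chi_{\Au}(1)|=\beta(\mg)/(|\B|-2)!$, which is the claimed number.

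The main obstacle, beyond invoking the critical-point count correctly, is the evaluation at $t=1$: the expression in Theorem~\ref{thm:maincomb}(ii) is of the form $0/0$ there, and its value is governed by the relation between $\beta(\mg)$ and the derivative of the chromatic polynomial, so the bookkeeping of signs and of the surviving factorial must be done carefully. I would also confirm that the genericity hypothesis on $\eng$ is exactly the genericity of weights required by the Varchenko--Orlik--Terao theorem, and that the essentiality of $\Au$ ensures the count is finite with all critical points simple.
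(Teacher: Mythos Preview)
Your argument is correct and follows essentially the same route as the paper: the paper simply cites Theorem~\ref{thm:chambercount} (which carries out exactly your differentiation of $\chi_{\mg}(t)=(t)_m\cdot\pcp(t)$ at $t=1$ and identifies $|\pcp(1)|$ with $\beta(\mg)/(m-2)!$) together with Corollary~\ref{cor:abrams}(ii) (which packages Theorem~\ref{thm:cpchar}, essentiality of $\Au$, and the Varchenko/Orlik--Terao critical-point count). You have unpacked these two intermediate results inline rather than invoking them by name, but the content is the same.
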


The paper is organized as follows. In Section \ref{sec:maindef} we establish basic properties and examples of Dirichlet arrangements. In Section \ref{sec:combo} we prove Theorem \ref{thm:maincomb}. In Section \ref{sec:super} we prove Theorem \ref{thm:supintro} and discuss the relationship of Dirichlet arrangements to previous work \cite{mu2015, stanley2015, suyama2018}. In Section \ref{sec:master} we prove Theorem \ref{thm:cpchar}. In the appendix we exhibit an action of $\Gal(\Qtr/\Q)$ on the critical points of any master function with positive rational weights, where $\Qtr$ is the field of totally real numbers.
\section{Dirichlet arrangements}
\label{sec:maindef}
	
	Given a base field $\KK$, an \emph{arrangement} $\AA$ in $\KK^d$ is a finite set of affine hyperplanes of $\KK^d$. We consider each arrangement $\AA$ in $\KK^d$ to be equipped with a set $\{f_H : H\in\AA\}$ of affine functionals $f_H:\KK^d\to\KK$ such that $H=\ker f_H$ for all $H\in \AA$. The $f_H$ are called \emph{defining functions} of $\AA$. We write
	\[Q(\AA)=\prod_{H\in \AA} f_H(x).\]
	
	An arrangement $\AA$ is \emph{defined over} a subring $S\subset \KK$ if in the standard basis of $\KK^d$ all coefficients of all defining functions $f_H$ belong to $S$. We write 
	\[T(\AA)=\bigcap_{H\in \AA} H\]
	if $\AA$ is nonempty and $T(\emptyset)=\KK^d$. We also write $M(\AA)=\KK^d\setminus \bigcup_{H\in \AA} H$. When $\KK=\R$, the connected components of $M(\AA)$ are called the \emph{chambers} of $\AA$. The arrangement $\AA$ is called \emph{central} if $T(\AA)$ is nonempty and \emph{essential} if the normal vectors of $\AA$ span $\KK^d$. The \emph{cone} $c\AA$ over $\AA$ is the central arrangement in $\KK^{d+1}$ defined by
	\[Q( c\AA) = x_0\prod_{H\in \AA} {f^h_H(x_0,\ldots,x_d)},\]
	where $f^h_H$ is the homogenization of $f_H$ with respect to the new variable $x_0$.

	\subsection{Dirichlet arrangements}
	
	By a \emph{graph} we will mean one that is finite, connected and undirected with no loops or multiple edges. Denote by $\g=(V,E)$ a graph on $d$ vertices and $k$ edges. The \emph{graphic arrangement} $\Ag$ of $\g$ over a field $\KK$ is the arrangement in $\KK^d$ defined by
	\[Q(\Ag)=\prod_{ij\in E} (x_i-x_j).\]
	
	Fix a set $\B\subsetneq V$ of $\geq 2$ vertices, no two of which are adjacent, and an injective function $u:\B\to \KK$. We call $\B$ the \emph{boundary} of $\g$ and $\inte=V\setminus \B$ the \emph{interior} of $\g$. We call $ u$ the \emph{boundary data} and the scalars $ u(j)\in\KK$ the \emph{boundary values}. The elements of $\B$ are called \emph{boundary nodes}. Write $m=|\B|$ and $n=|\inte|$, so $d=m+n$. Whenever the vector spaces $\KK^d$ and $\KK^n$ appear, we consider their coordinates to be indexed by $V$ and $\inte$, resp.
	
	\begin{mydef}
		Let $\Au$ be the arrangement in
		\[\XX=\{x\in \KK^d : x_j =  u(j)\mbox{ for all }j\in \B\}\cong \KK^n\]
		of hyperplanes $H\cap \XX$ for all $H\in \Ag$. An arrangement $\AA$ is \emph{Dirichlet} if $\AA=\Au$ for some $(\g, u)$.
		\label{def:Au}
	\end{mydef}

	This definition can be relaxed to include $m=0$, in which case $\Au=\Ag$ is graphic, and $m=1$, in which case $\Au$ has the same underlying combinatorics as $\Ag$. We restrict our attention to $m\geq 2$ in order to distinguish our results from the graphic cases. For example, graphic arrangements are central but not essential, but we have the following for Dirichlet arrangements.
	
	\begin{prop}
		Dirichlet arrangements are essential but not central.
		
		\begin{proof}
			For each $e\in E$ let $H_e$ be the corresponding element of $\Au$ with normal vector $v_e$ of the form $x_i-x_j$ or $x_i- u(j)x_j$. Since $\g$ is connected, for any $i\in \inte$ there is a path $P\subset E$ with one endpoint $i$ and the other endpoint in $\B$. We have $\sum_{e\in P} v_e = x_i$, replacing some $v_e$ with $-v_e$ if necessary. It follows that the normal vectors span $\KK^n$, so $\Au$ is essential. Since $\g$ is connected, there is a path $Q\subset E$ of between distinct boundary nodes $j$ and $j'$. If $x\in \bigcap_{e\in Q} H_e$, then $ u(j)=x_j=x_{j'}=  u(j')$, a contradiction. Hence $\Au$ is not central.
		\end{proof}
		
		\label{prop:centess}
	\end{prop}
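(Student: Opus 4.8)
The plan is to handle the two claims separately after passing to interior coordinates. First I would use the identification $\XX\cong\KK^n$ that records only the coordinates $x_i$ for $i\in\inte$, since the boundary coordinates are frozen at the values $u(j)$. The crucial preliminary step is to compute, under this identification, the defining function of each restricted hyperplane $H_e\cap\XX$ for an edge $e=ij\in E$. Because no two boundary nodes are adjacent, every edge has at least one interior endpoint, and there are exactly two cases: if $i,j\in\inte$, the restriction is $x_i-x_j$ with normal $e_i-e_j$; if exactly one endpoint, say $j$, lies in $\B$, then $x_j$ is the constant $u(j)$ and the restriction is the affine functional $x_i-u(j)$, whose normal is the standard basis vector $e_i$. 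This bookkeeping is the step I would be most careful with, since it is exactly where the distinction between $\Au$ and a genuine restriction of $\Ag$ enters.

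For essentiality I would show that each standard basis vector $e_i$, $i\in\inte$, lies in the span of the normals. Using connectivity of $\g$ together with $\B\neq\emptyset$, I would choose a path $P=(i=v_0,v_1,\ldots,v_\ell=j)$ from $i$ to some boundary node $j$, with all intermediate vertices interior. Summing the edge normals along $P$ with appropriate signs, the interior-edge normals $e_{v_t}-e_{v_{t+1}}$ telescope and the final boundary-edge normal contributes the leftover term, so the total is $\pm e_i$. Hence every $e_i$ belongs to the span of the normals, the normals span $\KK^n$, and $\Au$ is essential.

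For non-centrality I would exhibit a point forced to violate injectivity of $u$. Since $m\geq 2$ and $\g$ is connected, there is a path $Q$ joining two distinct boundary nodes $j$ and $j'$. Any $x\in T(\Au)$ lies in $\XX$, so $x_j=u(j)$ and $x_{j'}=u(j')$, and it also lies on every $H_e$ with $e\in Q$, which forces $x$ to be constant along $Q$ and hence $u(j)=u(j')$, contradicting injectivity. Therefore $T(\Au)=\emptyset$ and $\Au$ is not central.

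The main obstacle I anticipate is essentially the normal-vector bookkeeping in the first step; once the two edge types and their normals are correctly recorded, both the telescoping argument for essentiality and the path argument for non-centrality are short and rely only on connectivity together with, respectively, $\B\neq\emptyset$ and the injectivity of $u$ with $m\geq 2$.
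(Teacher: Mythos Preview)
Your proposal is correct and follows essentially the same approach as the paper's own proof: both compute the two types of edge normals in interior coordinates, telescope along a path from an interior vertex to the boundary to obtain each standard basis vector, and then use a path between two distinct boundary nodes together with injectivity of $u$ to rule out a common intersection point. Your version is in fact slightly more careful, since you explicitly note that every edge has an interior endpoint and that the path to the boundary can be chosen with all intermediate vertices interior, which makes the telescoping argument cleaner.
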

	
	Definition \ref{def:Au} can also be modified to accommodate repeated boundary values and edges between boundary nodes. In case of repeated boundary values, one can identify all vertices on which $ u$ takes the same value, removing any duplicate edges. In case $\B$ is not an independent set, one can simply remove all edges between boundary nodes, assuming that the resulting graph is connected. Then define $\Au$ as in Definition \ref{def:Au}.  
	
	For the remainder of the paper we will assume that $\KK=\R$. We think of $\Au$ as an $n$-dimensional affine slice of $\Ag$, where for all $i\in \inte$ the coordinates $x_i$ of $\XX\cong\R^n$ are inherited from $\R^d$, and for all $j\in \B$ the coordinate $x_j$ is specialized to the boundary value $ u(j)$. 
	
	\begin{eg}[Wheatstone bridge]
		Consider the graph $\g$ on the left side of Figure \ref{fig:dir}, where the boundary nodes $j_1$ and $j_2$ are marked by white circles. The pair $(\g,\B)$ is sometimes called a \emph{Wheatstone bridge} after the work of C.~Wheatstone \cite{wheatstone1843}. A Wheatstone bridge can also be represented by a circuit diagram, as in Figure \ref{fig:dir}; here, jagged edges denote resistors, and the symbol on top denotes a battery between the boundary nodes.
		
		Fix boundary values $ u(j_1)=1$ and $ u(j_2)=-1$. This corresponds to placing a 2-volt battery between the boundary nodes. Writing  $\inte= \{i_1,i_2\}$, the Dirichlet arrangement $\Au$ is defined by
		\[Q(\Au)=(x_{i_1}^2-1)(x_{i_2}^2-1)(x_{i_1}-x_{i_2}).\]
		The bounded chambers of $\Au$ are the open triangles shaded on the right-hand side of Figure \ref{fig:dir}.  
		
		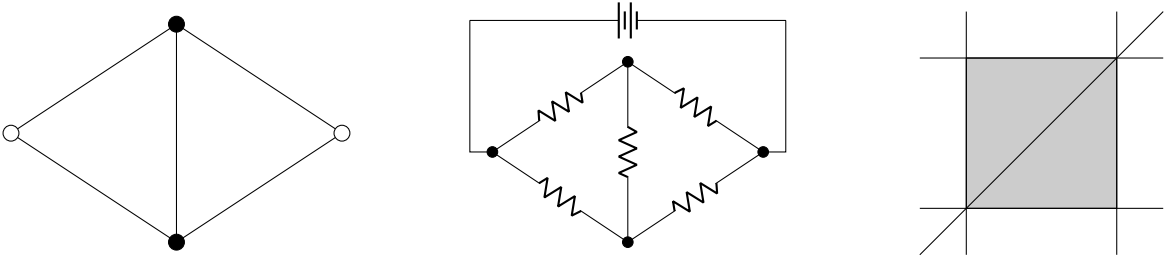
\begin{figure}[ht]
			\centering
			\ctikzset{bipoles/length=.8cm}
\begin{tikzpicture}[xscale=1,yscale=1]
\def\c{-6}
\coordinate (a) at (-2.2+\c,0);
\coordinate (b) at (\c,1.45);
\coordinate (c) at (\c,-1.45);
\coordinate (d) at (\c+2.2,0);
\draw (a) -- (b) -- (d) -- (c) -- (a);
\draw (b) -- (c);
\draw[fill=white] (a) circle (3pt);
\draw[fill=black] (b) circle (3pt);
\draw[fill=black] (c) circle (3pt);
\draw[fill=white] (d) circle (3pt);

\def\a{5.5}
\draw[fill=black!20] ({-1+\a},-1) -- ({\a+1},-1) -- ({\a+1},1) -- ({\a-1},1) -- cycle;
\draw ({\a-1},-1.618) -- ({\a-1},1.618);
\draw ({\a+1},-1.618) -- ({\a+1},1.618);
\draw ({\a-1.618},-1) -- ({\a+1.618},-1);
\draw ({\a-1.618},1) -- ({\a+1.618},1);
\draw ({\a-1.618},-1.618) -- ({\a+1.618},1.618);

\def\b{0}
\coordinate (a1) at (-1.8+\b,-0.25);
\coordinate (b1) at (\b,0.95);
\coordinate (c1) at (\b,-1.45);
\coordinate (d1) at (1.8+\b,-0.25);
\draw (a1) to [R] (b1) to [R] (d1) to [R] (c1) to [R] (a1);
\draw (b1) to [R] (c1);
\draw (a1) to ++(-0.3,0) to ++(0,1.75) to[battery] ++(4.2,0) to ++(0,-1.75) to (d1);
\draw[fill=black] (a1) circle (2pt);
\draw[fill=black] (b1) circle (2pt);
\draw[fill=black] (c1) circle (2pt);
\draw[fill=black] (d1) circle (2pt);
\end{tikzpicture}
			\caption{Left to right: a Wheatstone bridge $(\g,\B)$; the associated circuit diagram; and a corresponding Dirichlet arrangement $\Au$.}
			\label{fig:dir}
		\end{figure}
		
		\label{eg:dir}
	\end{eg}

\begin{eg}[Slices of braid arrangements]
	Suppose that $\inte$ is a clique, and that every vertex in $\B$ is adjacent to every vertex in $\inte$. We denote this graph with specified boundary by 
	\[\comp{m}{n}=(\g,\B).\]
	For instance, the Wheatstone bridge in Example \ref{eg:dir} is $\comp{2}{2}$. The case $\comp{5}{4}$ is illustrated in Figure \ref{fig:compjoin} with boundary nodes marked by white circles. Every Dirichlet arrangement is a subset of some $\Au$, where $(\g,\B)=\comp{m}{n}$.
	
	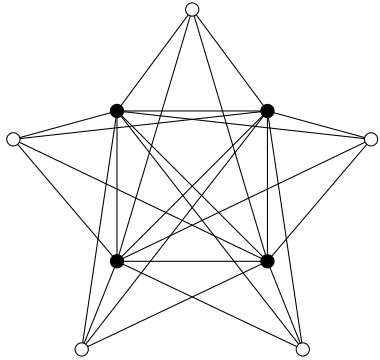
\begin{figure}[ht]
		\centering
		\begin{tikzpicture}[xscale=0.5,yscale=0.5]
\coordinate (a1) at (0,5);
\coordinate (a2) at ({5*sin(72)},{5*cos(72)});
\coordinate (a3) at ({5*sin(144)},{5*cos(144)});
\coordinate (a4) at ({5*sin(216)},{5*cos(216)});
\coordinate (a5) at ({5*sin(288)},{5*cos(288)});
\coordinate (b1) at (2,2.3);
\coordinate (b2) at (-2,2.3);
\coordinate (b3) at (-2,-1.7);
\coordinate (b4) at (2,-1.7);
\draw (a1) -- (b1);
\draw (a1) -- (b2);
\draw (a1) -- (b3);
\draw (a1) -- (b4);
\draw (a2) -- (b1);
\draw (a2) -- (b2);
\draw (a2) -- (b3);
\draw (a2) -- (b4);
\draw (a3) -- (b1);
\draw (a3) -- (b2);
\draw (a3) -- (b3);
\draw (a3) -- (b4);
\draw (a4) -- (b1);
\draw (a4) -- (b2);
\draw (a4) -- (b3);
\draw (a4) -- (b4);
\draw (a5) -- (b1);
\draw (a5) -- (b2);
\draw (a5) -- (b3);
\draw (a5) -- (b4);
\draw (b1) -- (b2);
\draw (b1) -- (b3);
\draw (b1) -- (b4);
\draw (b2) -- (b3);
\draw (b2) -- (b4);
\draw (b3) -- (b4);
\draw[fill=white] (a1) circle (5pt);
\draw[fill=white] (a2) circle (5pt);
\draw[fill=white] (a3) circle (5pt);
\draw[fill=white] (a4) circle (5pt);
\draw[fill=white] (a5) circle (5pt);
\draw[fill=black] (b1) circle (5pt);
\draw[fill=black] (b2) circle (5pt);
\draw[fill=black] (b3) circle (5pt);
\draw[fill=black] (b4) circle (5pt);
\end{tikzpicture}
		\caption{The network $\comp{5}{4}$ with boundary nodes marked in white.}
		\label{fig:compjoin}
	\end{figure}
	\label{eg:compjoin}
\end{eg}

\begin{eg}[Visibility arrangements of order polytopes]
	Let $P$ be a finite poset. The \emph{order polytope} $\mathcal{O}(P)$ of $P$ is the set of all order-preserving functions $P\to [0,1]$. Clearly $\mathcal{O}(P)$ is a convex polytope in $\R^P$. The \emph{visibility arrangement} $\operatorname{vis}(\mathcal{O}(P))$ of $\mathcal{O}(P)$ is the arrangement in $\R^P$ whose elements are the affine spans of all facets of $\mathcal{O}(P)$. It is so named because the chambers of $\operatorname{vis}(\mathcal{O}(P))$ correspond to the sets of facets of $\mathcal{O}(P)$ visible from different points in $\R^P$. Notice that the unbounded chambers of $\operatorname{vis}(\mathcal{O}(P))$ correspond to the sets of facets visible from far away.
	
	Consider the Hasse diagram $H$ of $P$ as a graph, so that the elements of $P$ are the vertices of $H$. Let $\g$ be the graph obtained by adding 2 vertices $j_0$ and $j_1$ to $H$, with $j_0 \sim i$ if $i$ is minimal in $P$ and $j_1\sim i$ if $i$ is maximal in $P$. Let $\B=\{j_0,j_1\}$, and let $ u:\B\to \R$ be given by $ u(j_0)=0$ and $ u(j_1)=1$. Then $\Au=\operatorname{vis}(\mathcal{O}(P))$ (see \cite[Theorem 4]{stanley2015}).
	\label{eg:vis}
\end{eg}

\begin{eg}[Linear order polytope]
	Let $P=\{1,\ldots,\ell\}$ with the usual linear ordering. The weakly increasing maps $P\to [0,1]$ correspond to points $x\in\R^\ell$ with $0\leq x_1 \leq \cdots \leq c_\ell \leq 1$. Thus the order polytope $\mathcal{O}(P)$ is an $\ell$-simplex in $\R^\ell$. Every nonempty subset of the $\ell+1$ facets of $\mathcal{O}(P)$ is a visibility set; by Corollary \ref{cor:vis} we must have $\frac{1}{2}\alpha(\mg)=2^{\ell+1}-1$, where $\mg$ (as defined in Example \ref{eg:vis}) is a cycle graph on $\ell+2$ vertices. The set of all facets is only visible from the interior of $\mathcal{O}(P)$, and is the only set not visible from far away.
\end{eg}
\section{Combinatorics of Dirichlet arrangements}
\label{sec:combo}

In this section we prove Theorems \ref{thm:maincomb} and \ref{thm:supintro}. Let $\g=(V,E)$ be a graph with boundary $\B\subsetneq V$ and boundary data $ u:\B\to \R$. Write $k=|E|$, $d=|V|$, $m=|\B|\geq 2$, and $n=|\inte|$. We denote by $\mg$ the graph obtained from $\g$ by adding an edge between each pair of boundary nodes.

Given an arrangement $\AA$, the \emph{intersection poset} of $\AA$ is the set $L(\AA)$ of nonempty intersections of elements of $\AA$, ordered by reverse inclusion and graded by codimension. Thus $X\leq Y$ in $L(\AA)$ means $X\supset Y$. If $\AA$ is central, then $L(\AA)$ is a geometric lattice. The \emph{characteristic polynomial} of $\AA$ is defined as the characteristic polynomial of $L(\AA)$ and is denoted by $\chi_\AA$.

The \emph{graphic arrangement} $\Ag$ of $\g$ is is the arrangement in $\R^d$ defined by
\[Q(\Ag)=\prod_{ij\in E} (x_i-x_j).\]
Graphic arrangements are well studied because one can translate between properties of $\Ag$ and corresponding properties of $\g$ \cite{doch2014, ehrenborg2009, imai1996, macinic2009, peeva2002, schenck2002}. The following theorem is the graphic version of Theorem \ref{thm:maincomb}. For proofs, see \cite{stanley2007}.

\begin{thm}
	For any graph $\g$, the following hold:
	\begin{enumerate}[(i)]
		\item The intersection poset $L(\Ag)$ is isomorphic to the lattice of connected partitions of $\g$
		\item The characteristic polynomial $\chi_{\Ag}$ is the chromatic polynomial of $\g$
		\item The chambers of $\Ag$ correspond to the acyclic orientations of $\g$.
	\end{enumerate}
	\label{thm:gra}
\end{thm}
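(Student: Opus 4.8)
The plan is to treat the three parts in order, using (i) to feed into (ii), and handling (iii) by a direct sign-vector bijection.

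For (i), I would start from the observation that intersecting the hyperplanes of $\Ag$ indexed by an edge set $S\subseteq E$ produces the linear subspace $\{x\in\R^V : x_i=x_j \text{ for all } ij\in S\}$, which is exactly the set of functions $V\to\R$ that are constant on each connected component of the spanning subgraph $(V,S)$. Sending this flat to the partition $\pi_S$ of $V$ into those connected components gives a well-defined map $L(\Ag)\to\{\text{partitions of }V\}$. Its image is precisely the set of partitions each of whose blocks induces a connected subgraph of $\g$: any such partition is realized by taking $S$ to be the union of the edges lying inside the blocks, while conversely $\pi_S$ always has connected blocks. The map is a bijection onto this set, since a flat is recovered from $\pi_S$ as the space of functions constant on blocks, and the dimension of the flat equals the number of blocks of $\pi_S$. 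Finally, reverse inclusion of flats corresponds to refinement of partitions (a smaller flat comes from more edge-equalities, i.e.\ a finer partition), so the bijection is an isomorphism of graded posets, identifying $L(\Ag)$ with the lattice of connected partitions.

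For (ii), I would use the finite-field method, which is the cleanest way to bridge the Möbius-theoretic definition of $\chi_{\Ag}$ and the combinatorially defined chromatic polynomial. Since the defining functions $x_i-x_j$ have integer coefficients, one may reduce modulo a prime $p$ and count points of the complement over a field of $p$ elements. Because connectivity of a spanning subgraph is field-independent, the intersection poset is unchanged under reduction for every prime, so the method applies with no bad primes and yields $\chi_{\Ag}(p)=\#\{\,x\colon V\to\{1,\dots,p\}\mid x_i\ne x_j \ \text{for all}\ ij\in E\,\}$, the number of proper colorings of $\g$ with $p$ colors. As two polynomials agreeing at infinitely many integers, $\chi_{\Ag}$ and the chromatic polynomial coincide. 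Alternatively one can avoid finite fields entirely: by (i) and the definition $\chi_{\Ag}(t)=\sum_{X\in L(\Ag)}\mu(\hat{0},X)\,t^{\dim X}$, the sum becomes $\sum_{\pi}\mu(\hat{0},\pi)\,t^{|\pi|}$ over connected partitions, which is Whitney's Möbius expansion of the chromatic polynomial.

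For (iii), I would argue by sign vectors. On any chamber of $\Ag$ the sign of each $x_i-x_j$ is constant, so a chamber determines an orientation of $\g$ by directing $i\to j$ whenever $x_i>x_j$ there. This orientation is acyclic, since a directed cycle would force a strictly decreasing cyclic chain $x_{v_1}>\cdots>x_{v_r}>x_{v_1}$. The assignment is injective because a chamber, being a maximal region on which all $f_H$ have fixed sign, is determined by that sign vector. For surjectivity, given an acyclic orientation I would choose a linear extension of the induced partial order and assign strictly increasing real values accordingly; the resulting point lies in the complement and realizes the prescribed signs, so every acyclic orientation occurs. Hence chambers correspond bijectively to acyclic orientations, consistent with Zaslavsky's count $(-1)^d\chi_{\Ag}(-1)$ and Stanley's acyclic-orientation theorem via (ii). The main obstacle is part (ii): unlike (i) and (iii), which are essentially direct translations, (ii) must reconcile two a priori unrelated definitions of a polynomial, and the genuine content lies in justifying the bridge—either verifying that graphic arrangements have no bad primes so that the point count equals $\chi_{\Ag}(p)$ for all $p$, or invoking Whitney's theorem identifying the bond-lattice Möbius sum with the chromatic polynomial.
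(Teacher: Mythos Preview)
The paper does not actually prove this theorem: it is stated as classical background with the sentence ``For proofs, see \cite{stanley2007}.'' Your argument is correct and follows exactly the standard lines one finds in that reference (flats $\leftrightarrow$ connected partitions for (i), the finite-field count or Whitney's M\"obius expansion for (ii), and the sign-vector/linear-extension bijection for (iii)), so there is nothing substantive to compare.

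One small slip worth fixing: in the parenthetical in part (i) you write ``a smaller flat comes from more edge-equalities, i.e.\ a finer partition.'' More edge-equalities merge more vertices into common blocks and hence yield a \emph{coarser} partition, not a finer one. The order-isomorphism you assert is correct---a larger flat (lower in $L(\Ag)$ under reverse inclusion) corresponds to a finer partition (lower in the refinement order)---but the explanatory parenthetical has the direction reversed.
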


\subsection{Intersection poset and connected partitions}
\label{sec:poset}

A \emph{connected partition} of $\g$ is a partition $\pi$ of $V$ whose blocks induce connected subgraphs of $\g$. The set $\cpt$ of connected partitions of $\g$ is a lattice ordered by refinement. That is, $X\leq Y$ in $\cpt$ means $X$ is a refinement of $Y$. A subset $S\subset \cpt$ is an \emph{order ideal} if for all $Y\in S$, $X\leq Y$ implies that $X\in S$.

\begin{mydef}
A connected partition of $\g$ is \emph{boundary-separating} if it belongs to
\[\seq=\{\pi\in \cpt:|P\cap\B|\leq 1\mbox{ for all }P\in \pi\}.\]
\end{mydef}

	\begin{proof}[Proof of Theorem \ref{thm:maincomb}(i)]
		Let $X\in L(\Au)$ and $x\in X$. For each $i\in V$ let $S_i\subset V$ be the set of $j\in V$ for which there exists a path $P$ from $i$ to $j$ such that $x_v$ is the same for all $v\in P$. We obtain an element $\lambda_X=\{S_i : i\in V\}$ of $\cpt$. No distinct boundary nodes $j$ and $j'$ can belong to a single block $S_i$, as this would imply that $ u(j) = u(j')$. Hence $\lambda_X\in\seq$.
		
		Now suppose that $\pi\in\seq$. We reverse the above construction. For every block $B\in\prt$, let $E_B\subset E$ be the subset of edges with both ends in $B$. These define an element
		\begin{equation*}
			Y_\pi = \bigcap_{B\in \prt} \bigcap_{e\in E_B} H_e
		\end{equation*}
		of $L(\Au)$, where each $H_e\in\Au$ is the hyperplane corresponding to $e$. It is not hard to see that $Y_{\lambda_{X}}=X$ and $\lambda_{Y_\pi}=\pi$. Moreover, for $X,X'\in L(\Au)$ we have $X\subset X'$ if and only if $\prt_{X'}\leq\prt_{X}$. The result follows.
	\end{proof}

\begin{cor}
	The intersection poset $L(\Au)$ depends only on $(\g,\B)$.
	\label{cor:poset}
\end{cor}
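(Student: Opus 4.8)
The plan is to deduce this immediately from Theorem~\ref{thm:maincomb}(i), which identifies $L(\Au)$ with the poset $\seq$ of boundary-separating connected partitions of $\g$. The essential observation is that $\seq$ is manifestly independent of the boundary data $u$.

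First I would recall that
\[\seq=\{\pi\in\cpt:|P\cap\B|\leq 1\text{ for all }P\in\pi\},\]
a set defined purely in terms of the graph $\g$ and the subset $\B\subset V$; the injective function $u$ does not appear in the definition. The refinement ordering on $\seq$ is likewise determined by $\g$ alone. Next I would invoke the order-isomorphism $L(\Au)\cong\seq$ established in the proof of Theorem~\ref{thm:maincomb}(i), under which reverse inclusion on $L(\Au)$ corresponds to refinement on $\seq$ (the mutually inverse assignments $X\mapsto\lambda_X$ and $\pi\mapsto Y_\pi$ give the bijection, and compatibility with the two orderings is the last sentence of that proof).

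Finally, given any two injective functions $u,u'\colon\B\to\R$, both produce the same combinatorial object $\seq$, whence $L(\Au)\cong\seq\cong L(\mathcal{A}_{u'})$. The only point requiring attention is to confirm that $u$ enters the construction of $\Au$ solely through its injectivity---which guarantees that no block of a connected partition can contain two boundary nodes, so that $\lambda_X\in\seq$---rather than through its specific values; this is exactly the role $u$ plays in the argument for Theorem~\ref{thm:maincomb}(i). Accordingly I expect no genuine obstacle here: the corollary is a direct consequence of the combinatorial characterization of the intersection poset, and the work has already been done in establishing part (i).
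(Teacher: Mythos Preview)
Your proposal is correct and matches the paper's approach: the corollary is stated without proof immediately after Theorem~\ref{thm:maincomb}(i), relying on exactly the observation you make, that $\seq$ is defined solely in terms of $(\g,\B)$.
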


\begin{eg}
	Let $(\g,\B)$ be the Wheatstone bridge from Example \ref{eg:dir} with any boundary data $ u$. The Hasse diagram of $L(\Ag)$ is drawn in Figure \ref{fig:poset}, where $L(\Au)$ is the order ideal consisting of the darker vertices.  
	
	\begin{figure}[ht]
		\centering
		\begin{tikzpicture}[scale=1.5]
\coordinate (a) at (0,0);
\coordinate (b1) at (-2.5,1);
\coordinate (b2) at (-2,1);
\coordinate (b3) at (-1.5,1);
\coordinate (b4) at (0,1);
\coordinate (b5) at (1,1);
\coordinate (b6) at (2,1);
\coordinate (c1) at (-2.5,2);
\coordinate (c2) at (-1.5,2);
\coordinate (c3) at (-1,2);
\coordinate (c4) at (-0.5,2);
\coordinate (c5) at (0.5,2);
\coordinate (c6) at (1.5,2);
\coordinate (c7) at (2.5,2);
\coordinate (d) at (0,3);
\draw[black!35] (c1) -- (d);
\draw[black!35] (c2) -- (d);
\draw[black!35] (c1) -- (b2);
\draw[black!35] (c1) -- (b4);
\draw[black!35] (c2) -- (b3);
\draw[black!35] (c2) -- (b5);
\draw[black!35] (c4) -- (d);
\draw[black!35] (c5) -- (d);
\draw[black!35] (c6) -- (d);
\draw[black!35] (c7) -- (d);
\draw[black] (a) -- (b2);
\draw[black] (a) -- (b3);
\draw[black] (a) -- (b4);
\draw[black] (a) -- (b5);
\draw[black] (a) -- (b6);
\draw[black] (c4) -- (b2);
\draw[black] (c4) -- (b5);
\draw[black] (c5) -- (b3);
\draw[black] (c5) -- (b4);
\draw[black] (c6) -- (b2);
\draw[black] (c6) -- (b3);
\draw[black] (c6) -- (b6);
\draw[black] (c7) -- (b4);
\draw[black] (c7) -- (b5);
\draw[black] (c7) -- (b6);

\draw[fill=black] (a) circle (2pt);
\draw[fill=black] (b2) circle (2pt);
\draw[fill=black] (b3) circle (2pt);
\draw[fill=black] (b4) circle (2pt);
\draw[fill=black] (b5) circle (2pt);
\draw[fill=black] (b6) circle (2pt);
\fill[black!35] (c1) circle (2pt);
\fill[black!35] (c2) circle (2pt);
\draw[fill=black] (c4) circle (2pt);
\draw[fill=black] (c5) circle (2pt);
\draw[fill=black] (c6) circle (2pt);
\draw[fill=black] (c7) circle (2pt);
\fill[black!30] (d) circle (2pt);
\end{tikzpicture}
		\caption{The Hasse diagram of $\cpt$, where $(\g,\B)$ is the Wheatstone bridge, with the Hasse diagram of $\seq$ marked in black.}
		\label{fig:poset}
	\end{figure}
	
	\label{eg:poset}
\end{eg}

\subsection{Characteristic polynomial and precolorings}

We now prove Propositions \ref{prop:ffm} and \ref{prop:pcpfor} below, which together imply Theorem \ref{thm:maincomb}(ii). We then discuss log-concavity of the coefficients of $\chi_{\Au}$.

For positive integers $\l$, write $[\l]=\{1,\ldots,\l\}$. Recall that a (proper) \emph{$\l$-coloring} of $\g$ is a function $V\to [\l]$ taking distinct values on adjacent vertices. Also recall that the \emph{chromatic polynomial} $\cp$ of $\g$ is a polynomial with integer coefficients such that $\cp(\l)$ is the number of $\l$-colorings of $\g$ for all integers $\l\geq 1$.

Let $c:\B\to[m]$ be a bijection. Herzberg and Murty \cite{herzberg2007} exhibited a polynomial $\pcp$ with integer coefficients such that
\[\pcp(\l)=|\{\widehat{c}:V\to [p] \mid \widehat{c}
\mbox{ is an }\l\mbox{-coloring of }\g
\mbox{ that extends }c\}|\]
for all integers $\l\geq m$. The polynomial $\pcp$ is fundamental in the study of Sudoku puzzles \cite{herzberg2007} and in the \emph{Precoloring Extension Problem} \cite{biro1992, chartrand2008}.

\begin{mydef}
	We call $\pcp$ the \emph{precoloring polynomial} of $(\g,\B)$.
\end{mydef}

The following result is due implicitly to Crapo and Rota \cite[Section 17]{crapo1970} and was isolated later by Athanasiadis \cite{athanasiadis1996}. The resulting \emph{Finite Field Method} is a powerful means of computing characteristic polynomials of arrangements.

\begin{prop}[{\cite[Theorem 2.2]{athanasiadis1996}}]
	Suppose that $\AA$ is an arrangement in $\R^d$ defined over $\Z$. Fix a prime $p\in\Z$, and let $\AA^p$ be the arrangement in $\FF_p^d$ obtained by reducing the defining equations of $\AA$ mod $p$. If $p$ is sufficiently large, then $\chi_{\AA}(p)=|M(\AA^p)|$.
	
	\label{prop:ffm}
\end{prop}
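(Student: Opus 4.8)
The plan is to deduce the identity from two ingredients: a point-counting formula for $|M(\AA^p)|$ valid over any finite field, obtained by M\"obius inversion on $L(\AA^p)$, together with a stability statement asserting that reduction mod $p$ leaves the intersection poset (with its dimension grading) unchanged once $p$ is large. Granting both, the formula for $|M(\AA^p)|$ becomes literally the evaluation $\chi_\AA(p)$.

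First I would prove the point-counting formula. For $y\in\FF_p^d$, let $X(y)\in L(\AA^p)$ be the intersection of all hyperplanes of $\AA^p$ through $y$, with $X(y)=\FF_p^d$ when $y$ lies on no hyperplane; this is the smallest flat containing $y$. Since $y\in X$ holds exactly when $X(y)\geq X$, counting the $p^{\dim X}$ points of each flat $X$ according to the flat they generate gives $p^{\dim X}=\sum_{Y\geq X} f(Y)$, where $f(Y)=\#\{y:X(y)=Y\}$. M\"obius inversion yields $f(X)=\sum_{Y\geq X}\mu(X,Y)\,p^{\dim Y}$, and specializing to the bottom element $\hat 0=\FF_p^d$ gives $|M(\AA^p)|=f(\hat 0)=\sum_{X\in L(\AA^p)}\mu(\hat 0,X)\,p^{\dim X}$, since $f(\hat 0)$ counts exactly the points on no hyperplane.

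Second I would show that for all sufficiently large $p$ there is a grading-preserving isomorphism $L(\AA^p)\cong L(\AA)$. Each candidate flat is indexed by a subset $\mathcal S\subseteq\AA$, and whether $\bigcap_{H\in\mathcal S}H$ is nonempty, what its dimension is, and which further hyperplanes contain it are all detected by ranks of the coefficient and augmented matrices of the associated integer linear systems. Each such rank is witnessed by a nonzero integer minor, and there are only finitely many subsets $\mathcal S$; choosing $p$ to exceed the absolute value of every relevant minor ensures no rank changes mod $p$. Hence no flat becomes empty, no two distinct flats merge, no new incidences appear, and all dimensions are preserved, giving the desired isomorphism, which in turn matches the M\"obius functions.

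Combining the two, the right-hand side of the point-counting formula equals $\sum_{X\in L(\AA)}\mu(\hat 0,X)\,p^{\dim X}=\chi_\AA(p)$, as claimed. The main obstacle is the stability step: one must make uniform, over all finitely many subsets at once, both the consistency of the linear systems (so that reduction neither destroys nor creates intersections) and the invariance of their ranks (so that dimensions and incidences persist), and then package ``sufficiently large $p$'' as a single threshold bounding all the finitely many minors involved.
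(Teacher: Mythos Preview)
The paper does not give its own proof of this proposition; it is stated without proof as a citation of \cite[Theorem~2.2]{athanasiadis1996} (with attribution also to Crapo--Rota), so there is nothing to compare against. Your argument is the standard one: M\"obius inversion on the intersection poset over $\FF_p$ yields $|M(\AA^p)|=\sum_{X\in L(\AA^p)}\mu(\hat 0,X)\,p^{\dim X}$, and for $p$ exceeding the finitely many integer minors governing ranks of the defining systems, $L(\AA^p)\cong L(\AA)$ as graded posets, turning that sum into $\chi_\AA(p)$. This is correct and is essentially the proof in Athanasiadis's paper.
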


\begin{prop}
	The characteristic polynomial of $\Au$ is the precoloring polynomial $\pcp$.
	
	\begin{proof}
		Fix a bijection $c:\B\to [m]$, and set boundary data $u=c$. Corollary \ref{cor:poset} implies that $\chi_{\Au}$ is unaffected by the choice of $u$. Consider $\FF_p^n$ as the set $[p]^n$. We can assign to any point $x\in M(\Au^p)$ an element of
		\begin{equation*}
			\{\widehat{c}:V\to [p] \mid \widehat{c}
			\mbox{ is a }p\mbox{-coloring of }\g
			\mbox{ that extends }c\}
		\end{equation*}
		by setting $\hat{c}(i)=x_i$ for all $i\in \inte$. This assignment is easily seen to be a bijection, whence $\pcp(p)=|M(\Au^p)|$. The result now follows from Proposition \ref{prop:ffm} and the fact that $\pcp$ is a polynomial, since $\pcp(p)=|M(\Au^p)|$ for infinitely many $p$.
	\end{proof}
\label{prop:cp}
\end{prop}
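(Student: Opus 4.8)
The plan is to prove the identity at the level of polynomials by showing that $\chi_{\Au}$ and $\pcp$ agree at infinitely many integers, using the Finite Field Method of Proposition~\ref{prop:ffm} to interpret each side as a point count over $\FF_p$.

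First I would invoke Corollary~\ref{cor:poset}: since $L(\Au)$, and therefore its characteristic polynomial $\chi_{\Au}$, depends only on $(\g,\B)$, I am free to choose the boundary data. I would take $u=c$ for a fixed bijection $c\colon\B\to[m]$. With this choice the defining functionals of $\Au$ are $x_i-x_{i'}$ for each interior edge $ii'$ and $x_i-c(j)$ for each edge $ij$ joining $i\in\inte$ to $j\in\B$; all have integer coefficients, so $\Au$ is defined over $\Z$ and Proposition~\ref{prop:ffm} is available.

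The central step is a bijection. For a prime $p\ge m$, regard $\FF_p^n$ as $[p]^n$ with coordinates indexed by $\inte$. A point $x$ lies in $M(\Au^p)$ exactly when it avoids every reduced hyperplane, i.e.\ when $x_i\ne x_{i'}$ for each interior edge and $x_i\ne c(j)$ for each boundary-incident edge. Extending $x$ by $c$ on $\B$ produces a map $\widehat c\colon V\to[p]$, and these two families of inequalities say precisely that $\widehat c$ is a proper $p$-coloring of $\g$ extending $c$. This correspondence $x\mapsto\widehat c$ is a bijection onto the colorings counted by $\pcp$, so $|M(\Au^p)|=\pcp(p)$.

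To finish, Proposition~\ref{prop:ffm} gives $\chi_{\Au}(p)=|M(\Au^p)|$ for all sufficiently large primes $p$, hence $\chi_{\Au}(p)=\pcp(p)$ for infinitely many $p$; two polynomials agreeing at infinitely many points are equal. I expect the only delicate point to be the bijection itself --- verifying that restricting $\Ag$ to the affine slice $\XX$ and reducing mod $p$ turns each edge into exactly the inequality above (with boundary-incident edges contributing the \emph{constant} condition $x_i\ne c(j)$), and that taking $p\ge m$ ensures the $m$ boundary colors are genuinely available so that $\pcp(p)$ carries its combinatorial meaning. Everything else is the standard polynomial-identity endgame.
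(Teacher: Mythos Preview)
Your proposal is correct and follows essentially the same argument as the paper: choose $u=c$ using Corollary~\ref{cor:poset}, set up the bijection between $M(\Au^p)$ and $p$-colorings of $\g$ extending $c$, and conclude via Proposition~\ref{prop:ffm} that the two polynomials agree at infinitely many primes. Your write-up is in fact slightly more explicit than the paper's in noting that $\Au$ is defined over $\Z$ and that $p\ge m$ is needed for $\pcp(p)$ to have its intended meaning.
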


Since $\pcp$ is the characteristic polynomial of an arrangement, it satisfies a deletion-restriction formula and a broken circuit theorem (see \cite{stanley2007}). There is also a corresponding Tutte polynomial (see \cite{ardila2007}). We will not explore these ideas here.

\begin{eg}
		Let $(\g,\B)=\comp{m}{n}$ as in Example \ref{eg:compjoin}. Fix a bijection $c:\B\to [m]$ and an integer $\l\geq d$. To extend $c$ to an $\l$-coloring of $\g$, we must choose for every interior vertex a color that has not yet been used. This accounts for $(\l-m)!/(\l-d)!$ possible extensions of $c$, and there are no others. Hence
	\begin{equation*}
		\pcp(t)= (t-m)_n = (t-m)(t-m-1)\cdots (t-d+1)
	\end{equation*}
	is a falling factorial.
	\label{eg:comppoly}
\end{eg}

In Example \ref{eg:comppoly} the precoloring polynomial $\pcp(t)$ divides the chromatic polynomial $\chi_{K_n}(t)$, where $K_n$ is the complete graph on $n$ vertices. This is a consequence of the following proposition.

\begin{prop}
	The precoloring polynomial $\pcp$ satisfies
	\[\chi_{\mg}(t)=(t)_m\cdot \pcp(t),\]
	where $\cdot$ denotes multiplication and $(t)_m=t(t-1)(t-2)\cdots (t-m+1)$ denotes a falling factorial.
	\begin{proof}
		Fix $\l\geq d$. We count the number of $\l$-colorings of $\widehat{G}$ vertex-by-vertex, starting with the boundary nodes. Since $\B$ is a clique in $\widehat{G}$, there are $\l$ ways to color the first boundary node, $\l-1$ ways to color the second, and $\l-r+1$ ways to color the $r$th. Once all the boundary nodes are colored, the number of ways to color the interior vertices is $\pcp(\l)$. Thus $\chi_{\mg}(t)=(t)_m\cdot \pcp(t)$ holds for infinitely many $t$, so it holds in general.
	\end{proof}
\label{prop:pcpfor}
\end{prop}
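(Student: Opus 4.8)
The plan is to establish the polynomial identity by evaluating both sides at all sufficiently large integers $\l$ and invoking the principle that two polynomials agreeing at infinitely many points must coincide. Concretely, I would show that for every integer $\l\geq m$ the number of proper $\l$-colorings of $\mg$ equals $(\l)_m\cdot\pcp(\l)$, where $(\l)_m=\l(\l-1)\cdots(\l-m+1)$; since $\chi_{\mg}(t)$ and $(t)_m\cdot\pcp(t)$ are both polynomials, this will force the stated identity $\chi_{\mg}(t)=(t)_m\cdot\pcp(t)$.

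The key structural observation is that a proper $\l$-coloring of $\mg$ is precisely a proper $\l$-coloring of $\g$ whose restriction to $\B$ is injective. Indeed, deleting the edges added among boundary nodes turns $\mg$ back into $\g$, and because $\B$ is independent in $\g$ those edges impose no constraint already present; the only effect of adding them is to require that the $m$ boundary nodes receive pairwise distinct colors. I would therefore count proper $\l$-colorings of $\mg$ by first specifying the restriction to $\B$ and then counting proper extensions to the interior $\inte$. Since the added edges make $\B$ a clique of size $m$ in $\mg$, the number of admissible (injective) maps $\B\to[\l]$ is exactly the falling factorial $(\l)_m$.

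The one point that needs care---and which I expect to be the main obstacle---is showing that the number of proper extensions to $\g$ is the same, namely $\pcp(\l)$, for \emph{every} injective boundary coloring, not merely for the distinguished bijection $c\colon\B\to[m]$ used to define $\pcp$. I would justify this by a color-relabeling symmetry: given any injective $b\colon\B\to[\l]$, both $b$ and $c$ have image an $m$-subset of $[\l]$, so the partial bijection $c\circ b^{-1}$ from the image of $b$ onto $[m]$ extends to a permutation $\sigma$ of $[\l]$ with $\sigma\circ b=c$. Post-composition with $\sigma$ is then a bijection between the proper $\l$-colorings of $\g$ extending $b$ and those extending $c$, the latter counted by $\pcp(\l)$. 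Hence each of the $(\l)_m$ boundary colorings admits exactly $\pcp(\l)$ proper extensions.

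Combining these counts gives $(\l)_m\cdot\pcp(\l)$ proper $\l$-colorings of $\mg$ for every integer $\l\geq m$. As this equals $\chi_{\mg}(\l)$ at infinitely many integers and both expressions are polynomials in the coloring variable, the identity $\chi_{\mg}(t)=(t)_m\cdot\pcp(t)$ holds identically, completing the argument.
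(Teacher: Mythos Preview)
Your proof is correct and follows essentially the same approach as the paper: count proper $\l$-colorings of $\mg$ by first coloring the boundary clique in $(\l)_m$ ways, then extending to the interior in $\pcp(\l)$ ways, and conclude by polynomial interpolation. You are in fact more careful than the paper on the one subtle point---why \emph{every} injective boundary coloring admits exactly $\pcp(\l)$ extensions rather than just the distinguished bijection $c$---which you handle cleanly via the color-relabeling symmetry; the paper leaves this implicit.
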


\begin{eg}
	Let $\g$ be the path graph on $d\geq 3$ vertices, and let $\B$ consist of both ends of the path. We have $\mg=C_d$, the cycle graph on $d$ vertices. Using the elementary formula $\chi_{C_d}(t)=(t-1)^d+(-1)^d(t-1)$, we obtain
	\begin{align*}
		\pcp(t)&=\frac{\chi_{C_d}(t)}{t(t-1)}\\
		&= \prod_{r=1}^{n} (t+\zeta^r-1),
	\end{align*}
	where $\zeta\in \C$ is any primitive $k$th root of unity. 
	\label{eg:pathcp}
\end{eg}


A sequence $a_0,\ldots,a_n$ of positive numbers is \emph{log-concave} if
\[a_r^2\geq a_{r-1}a_{r+1}\]
for all $r\in [n-1]$. A log-concave sequence is necessarily \emph{unimodal}; i.e., there exists $s\in \{0,1,\ldots,n\}$ such that
\[a_0\leq a_1 \leq \cdots \leq a_{s-1}\leq a_s \geq a_{s+1} \geq \cdots \geq a_n.\]

\begin{cor}
	Suppose that $\g$ contains a clique on $\ell$ vertices, and write
	\[\cp(t)/(t)_\ell = a_0t^n - a_1t^{n-1} + \cdots + (-1)^n a_n.\]
	The sequence $a_0,\ldots,a_n$ is log-concave.
	\label{cor:chrom}
\begin{proof}
	This is an application of results of Huh \cite{huh2012} to Proposition \ref{prop:pcpfor}.
\end{proof}
\end{cor}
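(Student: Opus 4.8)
The plan is to recognize $\cp(t)/(t)_\ell$ as the characteristic polynomial of a realizable hyperplane arrangement and then invoke Huh's log-concavity theorem; the crucial point is that log-concavity is \emph{not} preserved under polynomial division, so one cannot simply apply Huh's theorem to $\cp$ and then divide by $(t)_\ell$. First I would designate the $\ell$ vertices of the clique as a boundary. Let $\g_0$ be the graph obtained from $\g$ by deleting the $\binom{\ell}{2}$ edges of the clique, and let $\B$ be the set of clique vertices; then $\B$ is independent in $\g_0$, has $\ell\geq 2$ elements, and restoring all edges between boundary nodes recovers $\g$. Applying Proposition \ref{prop:pcpfor} to the pair $(\g_0,\B)$ gives $\cp(t)=(t)_\ell\cdot\pcp(t)$, whence $\cp(t)/(t)_\ell=\pcp(t)$ and the $a_r$ are exactly the absolute values of the coefficients of the precoloring polynomial $\pcp$.

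Next I would realize $\pcp$ as a genuine characteristic polynomial. Fixing a bijection $c:\B\to[\ell]$ and boundary data $u=c$, Proposition \ref{prop:cp} identifies $\pcp=\chi_{\Au}$, where $\Au$ is the Dirichlet arrangement of $(\g_0,u)$. This arrangement is defined over $\Z\subset\R$, so its underlying matroid is realizable over a field of characteristic zero, and the numbers $a_0,\ldots,a_n$ are, up to the signs $(-1)^r$, the coefficients of the characteristic polynomial of a realizable matroid. I would then apply the results of Huh \cite{huh2012}, which give that the coefficients of the characteristic polynomial of such a matroid form a log-concave sequence; applied to $\Au$ this yields the claim.

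The point requiring care, and the one I expect to be the main (if minor) obstacle, is that Definition \ref{def:Au} presupposes $\g_0$ to be connected, which may fail once the clique edges are deleted. I would dispatch this by observing that neither the vertex-by-vertex count in the proof of Proposition \ref{prop:pcpfor} nor the Finite Field Method computation in the proof of Proposition \ref{prop:cp} uses connectivity: both the identity $\pcp=\chi_{\Au}$ and the realizability of $\Au$ hold verbatim, since $\Au$ is in any case a bona fide hyperplane arrangement realizable over $\R$. Alternatively one may split $\g_0$ into connected components, note that $\pcp$ factors as a product of the corresponding precoloring polynomials, and use that a product of log-concave polynomials with nonnegative coefficients and no internal zeros is again log-concave. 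The degenerate cases $\ell\leq 1$ reduce directly to Huh's theorem applied to $\cp$ itself.
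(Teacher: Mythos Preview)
Your proposal is correct and follows essentially the same approach as the paper's one-line proof: recognize $\cp(t)/(t)_\ell$ as the characteristic polynomial of a Dirichlet arrangement via Propositions~\ref{prop:cp} and~\ref{prop:pcpfor}, then invoke Huh's log-concavity theorem for representable matroids. Your write-up is simply more explicit, and your treatment of the connectivity issue (which the paper does not mention) is a genuine and correctly handled detail; the only minor imprecision is speaking of the ``underlying matroid'' of the non-central arrangement $\Au$, where strictly one should pass to the cone $c\Au$ and note that $\chi_{\Au}=\chi_{c\Au}/(t-1)$ is the reduced characteristic polynomial of a representable matroid, to which Huh's results apply directly.
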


\begin{remark}
	Corollary \ref{cor:chrom} does not hold for general polynomials divisible by $(t)_\ell$. That is, if $f(t)$ is a polynomial divisible by $(t)_\ell$ in $\Z[t]$ whose coefficients form a log-concave sequence, then the coefficients of $f(t)/(t)_\ell$ do not necessarily form a log-concave sequence, even if we require that $f(t)/(t)_\ell$ is monic with coefficients that alternate in sign. Take, for example, the polynomial
\begin{equation}
(t)_3\cdot (t^2-t+2) = t^5 -4t^4 +7t^3 -8t^3 +4t.
\label{eq:nochrom}
\end{equation}
Corollary \ref{cor:chrom} says that \eqref{eq:nochrom} is not the chromatic polynomial of a graph containing a 3-cycle, since the coefficients of $t^2 - t + 2$ do not form a log-concave sequence.
\end{remark}

\subsection{Chambers and compatible orientations}
\label{sec:chambers}

We prove Theorem \ref{thm:chamberbij} below, which implies Theorem \ref{thm:maincomb}(iii). We then give formulas for the number of chambers and bounded chambers of $\Au$.

Given a real arrangement $\AA$, we denote by $\CC(\AA)$ and $\BC(\AA)$ the sets of chambers and bounded chambers, resp., of $\AA$. There is a bijection between the chambers of $\CC(\Ag)$ and set of the acyclic orientations of $\g$ due to Greene \cite{greene1976}. Namely, to any $C\in \CC(\Ag)$ we take $x\in C$ and assign the orientation $\oo(C)$ of $\g$ with $\vec{ij}$ if and only if $x_i>x_j$ for all $ij\in E$.

	We say that an orientation $\sigma$ of $\g$ \emph{respects $ u$} if for any path $i\to j$ in $\sigma$ between boundary nodes $i$ and $j$ we have $ u(i)> u(j)$. Denote by $\comh$ the set of acyclic orientations of $\g$ that respect $u$. Let $\com\subset\comh$ be the subset of those orientations with no sinks or sources in $\inte$.
	
	\begin{mydef}
		The orientations in $\comh$ and $\com$ are called \emph{semicompatible} and \emph{compatible}, resp.
	\end{mydef}
	
	Consider the edges of $\g$ as resistors with arbitrary conductances $\con\in (0,\infty)^k$. Current flows from vertices of higher voltage to vertices of lower voltage. As $\con$ varies, the compatible orientations are the orientations of all current flows through $\g$ that respect the boundary voltages $ u$ and in which the current across every edge is nonzero. The next proposition, which generalizes \cite[Theorem a.1]{lempel1967}, reinforces this point of view.

\begin{prop}
	The following are equivalent:
	\begin{enumerate}[(i)]
		\item $\mg$ is 2-connected
		\item $(\g, u)$ admits a compatible orientation for any boundary data $ u$
		\item Every interior vertex of $\g$ lies on a simple undirected path in $\g$ between distinct boundary nodes.
	\end{enumerate}
	\begin{proof}
		We prove the equivalence of (i) and (iii). The equivalence of (i) and (ii) will follow from Theorem \ref{thm:chambercount} below.
		
		Suppose that (i) holds. Let $i\in\inte$. If there is no simple path in $\g$ connecting $i$ to $\B$, then $\mg$ is disconnected, a contradiction. Suppose instead that there is a simple path in $\g$ connecting $i$ to a boundary node $j$, but that there is no simple path containing $i$ and two distinct boundary nodes. Notice that $\mg\setminus j$ is disconnected, so $\mg$ is not 2-connected, a contradiction. Hence (iii) holds.
		
		Now suppose that (i) does not hold. Let $i\in V$ be such that $\mg\setminus i$ is disconnected. Since $\B$ forms a clique in $\mg$, all boundary nodes remaining in $\mg\setminus i$ belong to the same component $X$ of $\mg\setminus i$. Let $j$ be a vertex of $\mg\setminus i$ not in $X$. Any path in $\g$ that contains $j$ and begins and ends at distinct boundary nodes $j$ must contain at least 2 edges (with multiplicity) incident to $i$. Such a path is not simple, so (iii) does not hold.
	\end{proof}
\label{prop:3things}
\end{prop}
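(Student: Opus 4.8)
The plan is to prove the cycle of implications by first settling the purely graph-theoretic equivalence (i) $\Leftrightarrow$ (iii), which is where I expect to spend most of the effort, and then linking (ii) to these through the correspondence between compatible orientations and bounded chambers of $\Au$. The two directions of (i) $\Leftrightarrow$ (iii) are a Menger/cut-vertex argument, with the only genuine care needed being that paths are required to live in $\g$ rather than in $\mg$, whose extra edges inside the boundary clique are not available.

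For (i) $\Rightarrow$ (iii) I would assume $\mg$ is 2-connected and fix $i \in \inte$ together with two distinct boundary nodes $j, j'$ (available since $m \geq 2$). I would invoke the fan version of Menger's theorem: applied in $\mg$ with source $i$ and target set $\{j,j'\}$, it yields two paths from $i$ to $\{j,j'\}$ meeting only at $i$ and ending at $j$ and $j'$ respectively, which together form a simple $j$--$j'$ path $P$ through $i$. The subtlety is that $P$ may traverse edges of $\mg$ absent from $\g$, namely edges inside the boundary clique; I would remove these by walking out from $i$ in each direction along $P$ and stopping at the first boundary node reached, say $j_1$ and $j_2$. The resulting subpath from $j_1$ to $j_2$ through $i$ has no boundary node in its interior, hence uses no added edge and lies in $\g$, and $j_1 \neq j_2$ because they sit on opposite sides of $i$ along the simple path $P$. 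This is exactly (iii).

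For (iii) $\Rightarrow$ (i) I would argue by contraposition. If $\mg$ is not 2-connected it has a cut vertex $v$ (note $\mg$ is connected because $\g$ is). Since $\B$ forms a clique in $\mg$, the vertices of $\B\setminus\{v\}$ remain in a single component $X$ of $\mg\setminus v$; choosing a vertex $i$ in any other component $Y$, this $Y$ contains no boundary node, so $i\in\inte$. Any simple path in $\g$ joining two distinct boundary nodes and passing through $i$ would have to leave $Y$ twice, once toward each boundary endpoint, and each crossing must use $v$, which simplicity forbids; the only bookkeeping to keep straight is the case where $v$ itself is a boundary node, which I would dispatch by noting that then at most one endpoint equals $v$, so the other half of the path still recrosses $v$. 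Thus $i$ witnesses the failure of (iii).

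Finally, for (i) $\Leftrightarrow$ (ii) the conceptual point is that the compatible orientations in $\com$ correspond to the bounded chambers $\BC(\Au)$, so (ii) amounts to $\BC(\Au)\neq\emptyset$; since the number of bounded chambers depends only on $(\g,\B)$ and is controlled by $\beta(\mg)$, which is positive precisely when $\mg$ is 2-connected, existence is independent of the boundary data $u$. This is the step I expect to be the true obstacle, and I would defer its quantitative half to Theorem \ref{thm:chambercount}. An alternative self-contained route, which I would mention, is to build a compatible orientation directly when (i) holds by orienting each edge along the decreasing direction of a (generic) discrete harmonic extension of $u$: the orientation is acyclic and respects $u$, and the mean-value property forces every interior vertex to lie strictly between its neighbors, so it is neither a sink nor a source, while 2-connectivity together with injectivity of $u$ rules out the constant regions that would otherwise create ties.
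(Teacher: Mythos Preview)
Your proposal is correct and follows essentially the same architecture as the paper: prove (i) $\Leftrightarrow$ (iii) directly by cut-vertex reasoning, and defer (i) $\Leftrightarrow$ (ii) to Theorem~\ref{thm:chambercount} via the bounded-chamber count and the positivity criterion for $\beta(\mg)$. Your contrapositive for (iii) $\Rightarrow$ (i) is the paper's argument almost verbatim (with cleaner bookkeeping in the case that the cut vertex lies in $\B$); for (i) $\Rightarrow$ (iii) you invoke the fan form of Menger's theorem and then trim away any boundary-clique edges, whereas the paper argues by contradiction that a putative unique boundary node reachable from $i$ would be a cut vertex of $\mg$---your route is slightly more explicit but amounts to the same idea.
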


\begin{thm}
	There is a bijection from the set of chambers (resp., bounded chambers) of $\Au$ to the set of semicompatible (resp., compatible) orientations of $(\g, u)$.
	
	\begin{proof}
		First we show that $\oo$ is a bijection $\CC(\Au)\to \comh$. Suppose that $C\in \CC(\Au)$, and let $x\in C$. Since $x_i= u(i)$ for all $i\in \B$, $\oo(C)$ respects $ u$. Since $M(\Au)\subset M(\Ag)$, $\oo(C)$ is acyclic. Hence $\oo(C)\in \comh$. Clearly $\oo$ is injective.
		
		Now suppose that $\sigma\in \comh$, and note that $ u$ defines a total order on $\B$. Since $\sigma$ is acyclic, we obtain a partial order on $V$ by setting $j\leq i$ if and only if $\vec{ij}\in\sigma$. Extend this order to a total order on $V$; such an extension also extends the total order on $\B$. Thus we can take $y\in M(\Au)$ whose entries respect the total order on $V$. Write $\oi(\sigma)$ for the chamber of $\Au$ containing $y$. We have $\oo(\oi(\sigma))=\sigma$, so $\oo$ is a bijection $\CC(\Au)\to \comh$, as desired.
		
		We must now show that $\sigma\in \com$ if and only if $\oi(\sigma)\in \BC(\Au)$. For the ``if'' direction, suppose that $\sigma\in \comh\setminus \com$, and suppose without loss of generality that $i\in \inte$ is a source of $\sigma$. Let $x\in \oi(\sigma)$, and let $y\in\R^n$ be the standard basis element corresponding to $i$. Let $t>0$ be large enough that $x+ty\in M(\Au)$, and let $C\in \CC(\Au)$ be the chamber containing $x+ty$. Clearly $i$ is a source of $\oo(C)$, and in fact $\sigma=\oo(C)$. Hence $\oi(\sigma)$ is unbounded, proving the ``if'' direction.
		
		For the ``only if'' direction, suppose that $\sigma\in \com$. Let $f\in \oi(\sigma)$, and let $\XX$ be as in Definition \ref{def:Au}. We show that any ray in $\XX$ originating at $f$ is not contained in the convex set $\oi(\sigma)$. Let $g\in \R^n\setminus\{0\}$ with $g_i=0$ for all $i\in \B$, and suppose without loss of generality that $g_v>0$ for some $v\in \inte$. For large enough $t>0$ we have $f+tg\in M(\Au)$ and $f_v+tg_v> u(w)$ for all $w\in \B$. If $C\in \CC(\Au)$ is the chamber containing $f+tg$, then $\oo(C)$ has a source in $\inte$. Hence $C\neq \oi(\sigma)$. Since the direction of the ray in $\XX$ was arbitrary, we conclude that $\oi(\sigma)$ is bounded.
	\end{proof}
	\label{thm:chamberbij}
\end{thm}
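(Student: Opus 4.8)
The plan is to adapt Greene's bijection \cite{greene1976} between the chambers of the graphic arrangement $\Ag$ and the acyclic orientations of $\g$ (Theorem \ref{thm:gra}(iii)) to the affine slice $\Au$. Recall that $\oo$ sends a chamber to the orientation recording the signs of $x_i-x_j$ along the edges. Since $M(\Au)\subset M(\Ag)$ (the boundary is independent, so no edge lies inside $\B$ and each $H\cap\XX$ is a proper hyperplane of $\XX$), every chamber of $\Au$ sits inside a chamber of $\Ag$, so $\oo$ is defined on $\CC(\Au)$ and lands among acyclic orientations. First I would check that the image lies in $\comh$: for $C\in\CC(\Au)$ and $x\in C$ the boundary coordinates satisfy $x_j=u(j)$, so a directed path $i\to j$ between boundary nodes forces $u(i)=x_i>x_j=u(j)$, i.e.\ $\oo(C)$ respects $u$. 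Injectivity is immediate, since the orientation encodes the full sign vector $(\operatorname{sign}(x_i-x_j))_{ij\in E}$, which is constant on each chamber and distinguishes distinct chambers.

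Next I would prove surjectivity by constructing the inverse. Given $\sigma\in\comh$, acyclicity yields a partial order $\leq_\sigma$ on $V$ (with $j\leq_\sigma i$ whenever $\vec{ij}\in\sigma$), and $u$ imposes a total order on $\B$. The hypothesis that $\sigma$ respects $u$ is exactly what makes the relation generated by $\leq_\sigma$ together with the order $u$ induces on $\B$ acyclic, hence a partial order; here I would verify that any putative cycle must alternate $\sigma$-relations with $u$-relations and is ruled out because $u$ is a total order on $\B$ compatible with $\leq_\sigma$ there. Taking a linear extension that restricts to the $u$-order on $\B$, I would realize it by a function $y\colon V\to\R$ strictly increasing along the extension and pinned to $u$ on $\B$; density and unboundedness of $\R$ let me place the interior values in the appropriate gaps so that all coordinates are distinct and $y\in M(\Au)$. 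By construction $\oo$ of the chamber containing $y$ is $\sigma$, giving a bijection $\CC(\Au)\to\comh$.

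Finally I would identify $\BC(\Au)$ with $\com$. The easy direction is that an interior source or sink forces unboundedness: if $i\in\inte$ is a source of $\oo(C)$, then increasing the free coordinate $x_i$ (a direction tangent to $\XX$) only enlarges the already-positive differences $x_i-x_j$, so the whole ray stays in $C$; the sink case is symmetric. Hence a bounded chamber has no interior source or sink and maps into $\com$. For the converse I would show that if $\sigma\in\com$ then every ray out of a point $f\in\oo^{-1}(\sigma)$ eventually leaves the chamber. Given a direction $g$ tangent to $\XX$ (so $g_j=0$ on $\B$), only interior coordinates can move; for large $t$ the coordinate of $f+tg$ attaining the global maximum (or minimum) belongs to an interior vertex, which is then a source (resp.\ sink) of the orientation of $f+tg$. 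That orientation lies in $\comh\setminus\com$, so $f+tg$ is in a different chamber; as $g$ was arbitrary, $\oo^{-1}(\sigma)$ is bounded.

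The main obstacle I anticipate is this last ``compatible $\Rightarrow$ bounded'' step: one must argue that no matter which tangent direction $g$ is chosen, pushing out along $g$ inevitably manufactures an interior extremum. The points to get right are that the boundary coordinates are frozen at the values $u(j)$ (so they cannot be the eventual extremum), that some interior coordinate must strictly increase or strictly decrease along $g$ (since $g\neq 0$ vanishes on $\B$), and the reduction by symmetry to the source case. A secondary subtlety is the order-extension in the surjectivity step, where the linear extension must respect the prescribed numerical boundary values, not merely their relative order.
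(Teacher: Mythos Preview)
Your proposal is correct and follows essentially the same route as the paper's proof: restrict Greene's map $\oo$ to $\CC(\Au)$, build the inverse via a linear extension of the partial order induced by $\sigma$ together with the $u$-order on $\B$, and handle the bounded case by pushing along a tangent direction to force an interior extremum. If anything, you are more explicit than the paper in two places where it is terse---checking that the combined relation $\leq_\sigma \cup \leq_u$ is acyclic, and noting that the linear extension must be realized by a point whose boundary coordinates are the actual numbers $u(j)$, not merely in the correct relative order---both of which are genuine details the paper leaves to the reader.
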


Zaslavsky \cite{zaslavsky1975} expressed the numbers of chambers and bounded chambers of a real arrangement $\AA$ in terms of the characteristic polynomial $\chi_\AA$. We are particularly interested in counting the bounded chambers of $\Au$ because of their later role in Section \ref{sec:mfu}.

\begin{prop}[{\cite[Theorems A and C]{zaslavsky1975}}]
	If $\AA$ is a real arrangement, then the number of chambers of $\AA$ is $|\chi_{\AA}(-1)|$, and the number of bounded chambers is $|\chi_{\AA}(1)|$.
	\label{prop:zas}
\end{prop}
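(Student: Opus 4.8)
This is Zaslavsky's theorem, so the plan is to prove both counts by induction via deletion and restriction, the standard engine for characteristic polynomials. Fix $H_0\in\AA$ and set $\AA'=\AA\setminus\{H_0\}$ (deletion) and $\AA''=\{H_0\cap H: H\in\AA',\ H_0\cap H\neq\emptyset\}$, an arrangement in $H_0\cong\R^{d-1}$ (restriction). The first ingredient is the deletion--restriction recursion
\[\chi_\AA(t)=\chi_{\AA'}(t)-\chi_{\AA''}(t),\]
which comes from tracking the Möbius function of $L(\AA)$ as $H_0$ is adjoined and may be quoted from \cite{stanley2007}.

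For the chamber count I would first prove the geometric recursion $|\CC(\AA)|=|\CC(\AA')|+|\CC(\AA'')|$: adjoining $H_0$ splits exactly those chambers of $\AA'$ whose interior $H_0$ meets, each into two, and these chambers biject with the connected components of $H_0\setminus\bigcup_{H\in\AA'}H$, that is, with $\CC(\AA'')$. Setting $f(\AA)=(-1)^{d}\chi_\AA(-1)$ and using that $\AA''$ sits in dimension $d-1$, the polynomial recursion gives $f(\AA)=f(\AA')+f(\AA'')$, matching the chamber recursion. Since the empty arrangement in $\R^{d}$ has one chamber and characteristic polynomial $t^{d}$, the base case holds and induction on $|\AA|$ yields $|\CC(\AA)|=(-1)^{d}\chi_\AA(-1)$. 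Because the coefficients of $\chi_\AA$ alternate in sign (see \cite{stanley2007}), this quantity is nonnegative and equals $|\chi_\AA(-1)|$.

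The bounded-chamber count is the crux. I would establish the analogous recursion $|\BC(\AA)|=|\BC(\AA')|+|\BC(\AA'')|$, whose geometry is subtler than before: cutting by $H_0$ can split a bounded chamber in two and can also close off a previously unbounded one, and the net gain is precisely the number of bounded chambers of $\AA''$ traced out on $H_0$. Evaluating the polynomial recursion at $t=1$ gives
\[(-1)^{d}\chi_\AA(1)=(-1)^{d}\chi_{\AA'}(1)+(-1)^{d-1}\chi_{\AA''}(1),\]
which matches once the identities $|\BC(\AA')|=(-1)^{d}\chi_{\AA'}(1)$ and $|\BC(\AA'')|=(-1)^{d-1}\chi_{\AA''}(1)$ are known by induction.

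The main obstacle is that this last identity genuinely requires essentiality: a non-essential arrangement has no bounded chamber, yet $(-1)^{d}\chi_\AA(1)$ need not vanish, as two parallel lines in $\R^{2}$ (where $\chi_\AA(t)=t^{2}-2t$, so $\chi_\AA(1)=-1$) already show. I would therefore carry essentiality through the induction, deleting only hyperplanes $H_0$ for which $\AA'$ stays essential---possible whenever $\AA$ has more hyperplanes than the dimension of the span of its normals, since then some normal is redundant---and using that the restriction of an essential arrangement is essential. The induction bottoms out when $|\AA|$ equals this dimension, where the linearly independent normals force all hyperplanes to meet in a single point; the resulting central arrangement has no bounded chamber, and $\chi_\AA(1)=0$ by the standard fact that $(t-1)$ divides the characteristic polynomial of any central arrangement containing a hyperplane. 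This essentiality bookkeeping is where the real care lies; in the present paper it is free, since Dirichlet arrangements are essential by Proposition \ref{prop:centess}.
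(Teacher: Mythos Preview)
The paper gives no proof of this proposition; it is quoted from Zaslavsky \cite{zaslavsky1975} as a black box and applied only to Dirichlet arrangements. Your deletion--restriction sketch is the standard route to Zaslavsky's theorem and is essentially correct, including the care you take with the base case $|\AA|=d$ and with preserving essentiality under deletion.

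The one point worth flagging is the one you yourself isolate: the bounded-chamber identity $|\BC(\AA)|=(-1)^d\chi_\AA(1)$ genuinely fails without essentiality, and your parallel-lines example shows that the proposition as literally stated in the paper is slightly too general (Zaslavsky's Theorem~C counts \emph{relatively} bounded faces). In context this is harmless, since the paper only invokes the result for Dirichlet arrangements, which are essential by Proposition~\ref{prop:centess}; but it is a fair reading of the statement, and your instinct to track essentiality through the induction is exactly right.
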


 Proposition \ref{prop:zas} gives $|\CC(\Au)|$ and $|\BC(\Au)|$ in terms of the precoloring polynomial $\pcp$. The next theorem gives these counts in terms of a genuine chromatic polynomial. We let
 \[\beta(\g)=|\cp'(1)|,\]
 where $\cp'$ is the derivative of $\cp$. The integer $\beta(\g)$ is called the \emph{beta invariant} of $\g$ \cite{benashski1995,oxley1982}. We have $\beta(\g)>0$ if and only if $\g$ is 2-connected.

\begin{thm}
	The number of semicompatible orientations of $(\g, u)$ is
	\begin{equation}
		|\comh|=\frac{\alpha(\mg)}{m!},
		\label{eq:comhfor}
	\end{equation}
	where $\alpha(\mg)$ is the number of acyclic orientations of $\mg$. The number of compatible orientations is
	\begin{equation}
		|\com|=\frac{\beta(\mg)}{(m-2)!},
		\label{eq:comfor}
	\end{equation}
	where $\beta(\mg)$ is the beta invariant of $\mg$.
	\label{thm:chambercount}
	
	\begin{proof}
		Proposition \ref{prop:pcpfor} says that
		\begin{equation}
		\chi_{\mg}(t)=(t)_m\cdot\pcp(t).
		\label{eq:pcpfor2}
		\end{equation}
		Evaluating both sides of \eqref{eq:pcpfor2} at $t=-1$ and rearranging gives $|\pcp(-1)|=|\chi_{\mg}(-1)|/m!$. Proposition \ref{prop:zas} implies that $|\chi_{\mg}(-1)|=\alpha(\mg)$. Now \eqref{eq:comhfor} follows from Theorem \ref{thm:chamberbij}.
		
		Taking derivatives of both sides of \eqref{eq:pcpfor2}, evaluating at $t=1$ and rearranging, we have $|\pcp(1)|=|\chi_{\mg}'(1)|/(m-2)!$. Thus \eqref{eq:comfor} follows from Proposition \ref{prop:zas} and Theorem \ref{thm:chamberbij}.
	\end{proof}
\end{thm}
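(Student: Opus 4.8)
The plan is to reduce both counts to evaluations of the characteristic polynomial $\chi_{\Au}=\pcp$ (Proposition \ref{prop:cp}), invoke the bijections of Theorem \ref{thm:chamberbij} to pass between chambers and orientations, and then transport everything to the genuine chromatic polynomial $\chi_{\mg}$ using the factorization $\chi_{\mg}(t)=(t)_m\cdot\pcp(t)$ of Proposition \ref{prop:pcpfor}. The two parts follow the same template, evaluated at $t=-1$ and $t=1$ respectively, with the second requiring a derivative.

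For the semicompatible count, Theorem \ref{thm:chamberbij} identifies $|\comh|$ with the number of chambers $|\CC(\Au)|$, which Zaslavsky's formula (Proposition \ref{prop:zas}) evaluates as $|\chi_{\Au}(-1)|=|\pcp(-1)|$. I would substitute $t=-1$ into the factorization and compute $(-1)_m=(-1)(-2)\cdots(-m)=(-1)^m m!$, so that $|\pcp(-1)|=|\chi_{\mg}(-1)|/m!$. Since the chromatic polynomial of $\mg$ is the characteristic polynomial of its graphic arrangement, whose chambers are the acyclic orientations of $\mg$ (Theorem \ref{thm:gra}), Zaslavsky again gives $|\chi_{\mg}(-1)|=\alpha(\mg)$, yielding \eqref{eq:comhfor}.

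The compatible count is the interesting case and proceeds identically but at $t=1$, where $|\com|=|\BC(\Au)|=|\chi_{\Au}(1)|=|\pcp(1)|$. The subtlety is that $(1)_m=0$ whenever $m\geq 2$, since the falling factorial carries the factor $(t-1)$; thus evaluating the factorization directly at $t=1$ is uninformative, and one must differentiate. Applying the product rule to $\chi_{\mg}(t)=(t)_m\cdot\pcp(t)$ and setting $t=1$, the term $(1)_m\,\pcp'(1)$ vanishes, leaving $\chi_{\mg}'(1)=\left.\bigl[(t)_m\bigr]'\right|_{t=1}\cdot\pcp(1)$. Because only the single factor $(t-1)$ of $(t)_m$ vanishes at $t=1$, I would extract the surviving product $\left.\bigl[(t)_m\bigr]'\right|_{t=1}=\prod_{r\in\{0,2,3,\ldots,m-1\}}(1-r)=(-1)^m(m-2)!$, so that $|\pcp(1)|=|\chi_{\mg}'(1)|/(m-2)!$. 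Recognizing $|\chi_{\mg}'(1)|=\beta(\mg)$ from the definition of the beta invariant then gives \eqref{eq:comfor}.

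The main obstacle is precisely this degeneracy at $t=1$: since $(t)_m$ vanishes there, the bounded-chamber count cannot be read off from a direct substitution and is instead governed by the first-order behavior of $\chi_{\mg}$, which is exactly the data the beta invariant records. Isolating the one nonvanishing term of the derivative and confirming the $(m-2)!$ normalization is the single computation needing care; everything else is an assembly of the bijections and evaluation formulas established earlier in the section.
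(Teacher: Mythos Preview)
Your proposal is correct and follows essentially the same route as the paper's proof: both use the factorization $\chi_{\mg}(t)=(t)_m\cdot\pcp(t)$ from Proposition~\ref{prop:pcpfor}, evaluate at $t=-1$ for the semicompatible count, and differentiate before evaluating at $t=1$ for the compatible count, invoking Theorem~\ref{thm:chamberbij} and Proposition~\ref{prop:zas} throughout. Your write-up is in fact more explicit than the paper's in isolating why the derivative is needed and in computing the surviving factor $(-1)^{m-2}(m-2)!$ of $[(t)_m]'|_{t=1}$.
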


The count $|\comh|=|\pcp(-1)|$ was obtained by Jochemko and Sanyal \cite[Corollary 4.5]{jochemko2014}, who used a combinatorial reciprocity for $\pcp$. Equation \eqref{eq:comfor} seems to be the first analogous treatment of $\pcp(1)$.

\begin{eg}
	Suppose that $\B=\{i,j\}$ with any boundary data $ u$. Here the orientations in $\com$ are called \emph{$ij$-bipolar} and have applications to graph drawing \cite{fraysseix1995}. If one considers the edges of $\g$ as resistors with arbitrary positive conductances, then the $ij$-bipolar orientations of $\g$ are the possible orientations of current flow through $\g$ in which the current flowing through each resistor is nonzero after a battery is put across $i$ and $j$. In this case, the formula \eqref{eq:comfor} was observed by Abrams and Kenyon \cite{abrams2016}.  
\end{eg}

\begin{proof}[Proof of Corollary \ref{cor:vis}]
	This follows from Example \ref{eg:vis} and Theorem \ref{thm:chambercount}.
\end{proof}

\begin{q}
	Let $\Gamma$ be the graph with vertex set $\BC(\Au)$ and an edge between chambers $C$ and $C'$ whenever $C$ and $C'$ are adjacent in $\R^n$. O.~de Mendez showed in \cite{demendez1994} that if $m=2$ and $\mg$ is 3-connected, then $\Gamma$ is connected (see \cite[Theorem 7.1]{fraysseix1995}). Is $\Gamma$ connected whenever $\mg$ is 3-connected?
\end{q}
\section[Supersolvability and psi-graphical arrangements]{Supersolvability and \texorpdfstring{$\psi$}{psi}-graphical arrangements}
\label{sec:super}

We prove Theorem \ref{thm:stansup} below, building on results of \cite{mu2015,stanley2015,suyama2018}. This will imply Theorem \ref{thm:supintro}. Stanley \cite{stanley2015} introduced the following class of arrangements to study visibility arrangements of order polytopes (see Example \ref{eg:vis}).

\begin{mydef}
	Denote the power set of $\R$ by $\mathcal{P}(\R)$, and let $\psi:V\to \mathcal{P}(\R)$ be such that $|\psi(i)|<\infty$ for all $i\in V$. Let $\Ap$ be the arrangement in $\R^n$ of hyperplanes $\{x_i=x_j\}$ for all $ij\in E$ and $\{x_i=\alpha\}$ for all $i\in V$ and $\alpha\in \psi(i)$. An arrangement is called \emph{$\psi$-graphical} if it is of the form $\Ap$ for some pair $(\g,\psi)$.  
	\label{def:Ap}
\end{mydef}

It turns out that every Dirichlet arrangement can be realized as a $\psi$-graphical arrangement, and vice versa. We prove this equivalence. The main benefit of our definition over Definition \ref{def:Ap} is that it renders more natural and intuitive descriptions of combinatorial features of Dirichlet arrangements that closely resemble their graphic counterparts. Theorem \ref{thm:supintro} and the results of Section \ref{sec:combo} are just a few examples of this.

\begin{prop}
	The classes of Dirichlet arrangements and $\psi$-graphical arrangements are equal.
	
	\begin{proof}
		Let $\g=(V,E)$ be a graph with boundary $\B$ and boundary data $ u$. Let $\gint$ be the subgraph of $\g$ induced by $\inte$, and let $\pint:\inte\to \mathcal{P}(\R)$ be given by $\pint(i)=\{ u(j) : j\sim i\mbox{ and }j\in\B\}$. We have $\Au=\AA_{\gint,\pint}$. Hence every Dirichlet arrangement is $\psi$-graphical.
		
		Now consider a $\psi$-graphical arrangement $\Ap$, and let $S=\bigcup_{i\in V} \psi(i)$. Let $V'=V\cup \{j_s : s\in S\}$ and $E'=\{ij_s : i\in V \mbox{ and } s\in \psi(i)\}$. Also let $\g'=(V',E')$, and let $ u' : \{j_s : s\in S\}\to \R$ be given by $u(j_s) = s$ for all $s\in S$. It is not hard to see that $\Ap=\AA_{\g', u'}$. Hence every $\psi$-graphical arrangement is Dirichlet.
	\end{proof}
\label{prop:psigra}
\end{prop}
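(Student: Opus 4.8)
The classes of Dirichlet arrangements and $\psi$-graphical arrangements are equal.

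The statement asserts two containments, and my plan is to prove each by an explicit construction that translates one presentation into the other and then verifies the two arrangements have the same defining equations.

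For the inclusion that every Dirichlet arrangement is $\psi$-graphical, the plan is to start from $(\g,\B, u)$ and restrict attention to the interior. First I would let $\gint$ be the subgraph of $\g$ induced by the interior $\inte$, and define $\pint:\inte\to\mathcal P(\R)$ by recording, for each interior vertex $i$, the boundary values of its boundary neighbors, i.e.\ $\pint(i)=\{ u(j):j\sim i,\ j\in\B\}$. The key observation is that in the slice $\XX\cong\R^n$ the defining equations of $\Au$ split into two families: for an edge $ij\in E$ with both endpoints interior, the equation $x_i=x_j$ survives unchanged, matching the graphic part of $\AA_{\gint,\pint}$; for an edge $ij\in E$ with $j\in\B$, the coordinate $x_j$ is specialized to $ u(j)$, so the equation becomes $x_i= u(j)$, matching the affine part $\{x_i=\alpha\}$ for $\alpha= u(j)\in\pint(i)$. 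Since $\B$ is independent, every edge falls into exactly one of these two cases, so the hyperplanes of $\Au$ and of $\AA_{\gint,\pint}$ coincide; hence $\Au=\AA_{\gint,\pint}$.

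For the reverse inclusion, the plan is to build a graph with fresh boundary nodes that encode the finite sets $\psi(i)$. Given $(\g,\psi)$ with $\g=(V,E)$, I would let $S=\bigcup_{i\in V}\psi(i)$, introduce one new boundary vertex $j_s$ for each $s\in S$, and set $V'=V\cup\{j_s:s\in S\}$, $E'=E\cup\{ij_s:i\in V,\ s\in\psi(i)\}$, with boundary data $ u'(j_s)=s$. (Here one takes $\B=\{j_s:s\in S\}$ as the boundary of $\g'=(V',E')$.) Under the slice $x_{j_s}= u'(j_s)=s$, the edges of $E$ give $x_i=x_j$ and the edges $ij_s$ give $x_i=s$ for $s\in\psi(i)$, which is precisely the defining data of $\Ap$; thus $\Ap=\AA_{\g', u'}$.

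The main obstacle is not any hard computation but ensuring the construction in each direction actually lands in the class as \emph{defined} in the paper, since Definition~\ref{def:Au} imposes hypotheses that must be checked. In the forward direction one should confirm that passing to $\gint$ is legitimate even when an interior vertex has no boundary neighbor (then $\pint(i)=\emptyset$, which is allowed). In the reverse direction the delicate points are that the new boundary nodes must form an independent set in $\g'$ — which holds because edges in $E'$ only join old vertices to new ones, never two new ones — and that $ u'$ must be injective, which holds since distinct $s\in S$ give distinct values by construction. One should also note the mild degenerate cases (an interior vertex adjacent to the same boundary value twice, or $\g'$ failing to be connected) are handled by the relaxations of Definition~\ref{def:Au} discussed after Proposition~\ref{prop:centess}. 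Once these bookkeeping checks are in place, the equality of hyperplane sets is immediate in both directions.
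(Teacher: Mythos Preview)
Your proposal is correct and follows essentially the same two-way construction as the paper's proof: restrict to the interior with $\pint(i)=\{u(j):j\sim i,\ j\in\B\}$ in one direction, and attach fresh boundary nodes $j_s$ with $u'(j_s)=s$ in the other. Your write-up is in fact slightly more careful than the paper's: you correctly take $E'=E\cup\{ij_s:\,s\in\psi(i)\}$ (the paper's displayed $E'$ omits $E$, evidently a slip), and you explicitly verify the definitional hypotheses (independence of $\B'$, injectivity of $u'$, and the mild degeneracies handled by the relaxations after Proposition~\ref{prop:centess}) that the paper leaves implicit.
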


Until now, research on $\psi$-graphical arrangements has focused on questions of supersolvability and freeness. Supersolvable arrangements enjoy a number of useful combinatorial, topological and algebraic properties, and are fundamental in the study of hyperplane arrangements. We do not discuss \emph{freeness} of arrangements in this paper, except to remark that it is a far-reaching generalization of supersolvability. For more on supersolvability and freeness, see \cite{orlik1992}.
 
 Let us recall the definition supersolvability for non-central arrangements. An element $X$ of a lattice $L$ is called \emph{modular} if
 \[\rk(X)+\rk(Y)=\rk(X\wedge Y)+\rk(X\vee Y)\]
 for all $Y\in L$, where $\wedge$ denotes the meet in $L$ and $\vee$ denotes the join.
 
 \begin{mydef}
 	A non-central arrangement $\AA$ is \emph{supersolvable} if there exists a maximal chain of $L(c\AA)$ consisting entirely of modular elements.
 \end{mydef}
 
  A \emph{perfect elimination ordering} of $\g$ is an ordering $i_1,\ldots,i_n$ of the vertices such that every $i_s$ is simplicial in the subgraph induced by $\{i_s,\ldots,i_n\}$. If a perfect elimination ordering of $\g$ exists, then $\g$ is called \emph{chordal}. The following proposition summarizes results of Edelman and Stanley \cite{edelman1994,stanley1972}.
  
  \begin{prop}[{\cite[Theorem 3.3]{edelman1994} and \cite[Proposition 2.8]{stanley1972}}]
  	The following are equivalent:
  	\begin{enumerate}[(i)]
  		\item $\g$ is chordal
  		\item $\Ag$ is supersolvable
  		\item $\Ag$ is free.
  	\end{enumerate}
  \label{prop:supersolvable}
  \end{prop}
  
  
  We provide a direct analog for Dirichlet arrangements. Our characterization is based on work of Mu--Stanley and Suyama--Tsujie \cite{mu2015,suyama2018}.
  
  \begin{thm}[{Generalization of Theorem \ref{thm:gra}(iv)}]
  	The following are equivalent:
  	\begin{enumerate}[(i)]
  		\item $\mg$ is chordal
  		\item $\Au$ is supersolvable
  		\item $\Au$ is free.
  	\end{enumerate}
  	\begin{proof}
  		Let $\gint$ denote the subgraph of $\g$ induced by $\inte$, and let $\pint:\inte\to 2^\R$ be given by
  		\[\pint(i)=\{ u(j) : j\sim i\mbox{ and }j\in\B\}.\]
  		Notice that $\Au=\AA_{\gint,\pint}$. A \emph{weighted elimination ordering} of $(\gint,\pint)$ is a perfect elimination ordering $i_1,\ldots,i_{n-m}$ of $\gint$ such that if $i_r\sim i_s$ with $r<s$, then $\pint(i_r)\subset \pint(i_s)$. We show that $(\gint,\pint)$ admits a weighted elimination ordering if and only if $\mg$ is chordal. Theorem \ref{thm:stansup} will then follow from \cite[Theorem 2.2]{suyama2018}.
  		
  		Suppose that $i_1,\ldots,i_n$ is a weighted elimination ordering of $(\gint,\pint)$. We claim that
  		\[i_1,\ldots,i_n,j_1,\ldots,j_m\]
  		is a perfect elimination ordering of $\mg$ for any ordering $j_1,\ldots,j_m$ of $\B$. Suppose that $i_r\sim i_s$ and $i_r\sim i_t$ for $r<s,t$. Clearly the same adjacencies hold in $\mg$. Now suppose that $i_r\sim i_s$ and $i_r\sim j$ for some $j\in\B$. Since $r<s$ we have $\pu(i_r)\subset \pu(i_s)$, so $u(j)\in \pu(i_s)$. Hence $i_s\sim j$ in $\mg$. Now suppose without loss of generality that $i_r\sim j_1$ and $i_r\sim j_2$. Since $\B$ is a clique in $\mg$, we have $j_1\sim j_2$. The claim follows, proving that $\mg$ is chordal.
  		
  		Conversely, suppose that $\mg$ is chordal. Since $\B$ is a clique, there is a perfect elimination ordering of $\mg$ whose last $m$ vertices are the elements of $\B$ by a result of Rose \cite[p.~603]{rose1970}. The first $n$ vertices of this perfect elimination ordering form a weighted elimination ordering of $(\gint,\pint)$. 
  	\end{proof}
    	\label{thm:stansup}
  \end{thm}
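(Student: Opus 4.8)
The plan is to transfer the question from the Dirichlet arrangement $\Au$ to its underlying combinatorial data via the $\psi$-graphical realization, and then reduce to a known characterization of supersolvable and free $\psi$-graphical arrangements. First I would record that, by Proposition \ref{prop:psigra}, $\Au=\AA_{\gint,\pint}$, where $\gint$ is the subgraph of $\g$ induced by the interior $\inte$ and $\pint(i)=\{u(j):j\sim i,\ j\in\B\}$. This puts $\Au$ in the form to which the results of Suyama--Tsujie apply: their \cite[Theorem 2.2]{suyama2018} gives, for an arrangement of this type, the equivalence of supersolvability, freeness, and the existence of a \emph{weighted elimination ordering} of $(\gint,\pint)$, namely a perfect elimination ordering $i_1,\ldots,i_n$ of $\gint$ with the extra property that $i_r\sim i_s$ and $r<s$ force $\pint(i_r)\subset\pint(i_s)$. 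With this in hand, the equivalence of (ii) and (iii) is immediate, and the whole theorem reduces to the purely combinatorial claim that $(\gint,\pint)$ admits a weighted elimination ordering if and only if $\mg$ is chordal.

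For the forward implication I would take a weighted elimination ordering $i_1,\ldots,i_n$ of $(\gint,\pint)$ and append an arbitrary ordering $j_1,\ldots,j_m$ of the boundary nodes, claiming that $i_1,\ldots,i_n,j_1,\ldots,j_m$ is a perfect elimination ordering of $\mg$. To check that each $i_r$ is simplicial among the later vertices, I would split into cases according to whether its two later neighbors are both interior, one interior and one boundary, or both boundary. The interior--interior case is inherited from the perfect elimination ordering of $\gint$; the interior--boundary case is exactly where the weight condition $\pint(i_r)\subset\pint(i_s)$ does its work, translating membership $u(j)\in\pint(i_r)$ into the edge $i_s\sim j$ of $\mg$; and the boundary--boundary case is handled by the fact that $\B$ is a clique in $\mg$. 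Since the boundary tail is itself a clique, it causes no further difficulty, and chordality of $\mg$ follows.

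For the converse I would use that $\B$ is a clique in $\mg$ together with Rose's theorem \cite[p.~603]{rose1970}, which guarantees a perfect elimination ordering of the chordal graph $\mg$ whose final $m$ vertices are precisely the elements of $\B$. Deleting this boundary tail leaves an ordering of the interior vertices that is automatically a perfect elimination ordering of $\gint$, and I would verify the weight-compatibility condition by reading off simpliciality in $\mg$: if $i_r\sim i_s$ with $r<s$ and $u(j)\in\pint(i_r)$, then $i_r$ is adjacent to both $i_s$ and $j$ in $\mg$, so simpliciality forces $i_s\sim j$ and hence $u(j)\in\pint(i_s)$, giving $\pint(i_r)\subset\pint(i_s)$.

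I expect the main obstacle to be the interior--boundary case in the forward direction and its mirror in the converse: this is the single point at which the weight function $\pint$, the boundary clique, and the added edges of $\mg$ must be reconciled, and getting the inclusion $\pint(i_r)\subset\pint(i_s)$ to interact correctly with simpliciality is the crux of the argument. Once the dictionary between weighted elimination orderings of $(\gint,\pint)$ and perfect elimination orderings of $\mg$ is set up cleanly, the remainder is bookkeeping, and the structural content—the equivalence of supersolvability and freeness with the ordering condition—is supplied wholesale by \cite[Theorem 2.2]{suyama2018}.
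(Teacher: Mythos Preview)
Your proposal is correct and follows the paper's proof essentially line for line: both realize $\Au$ as the $\psi$-graphical arrangement $\AA_{\gint,\pint}$, invoke \cite[Theorem 2.2]{suyama2018} to reduce to the existence of a weighted elimination ordering, and then establish the bijection between such orderings and perfect elimination orderings of $\mg$ with $\B$ at the end via the same three-case analysis and Rose's theorem. If anything, your converse is slightly more detailed than the paper's, which simply asserts that the truncated ordering is a weighted elimination ordering without spelling out the weight-compatibility check.
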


\begin{eg}
	Let $(\g,\B)=\comp{m}{n}$ with any boundary data $u$. Since $\mg$ is complete, any ordering of $V$ is a perfect elimination ordering of $\mg$. Hence $\Au$ is supersolvable.
	
	More generally, suppose that $\inte$ is a clique in $\g$, but make no assumptions about which boundary nodes and which interior vertices are adjacent. Write $\B=\{j_1,\ldots,j_m\}$ and $\inte=\{i_1,\ldots,i_{n-m}\}$ so that if $i_r$ is adjacent to a boundary node and $r<s$, then $i_s$ is adjacent to a boundary node. Notice that $i_1,\ldots,i_{n-m},j_1,\ldots,j_m$ is a perfect elimination ordering of $\mg$. Hence $\Au$ is supersolvable for any boundary data $ u$. 
\end{eg}

\begin{eg}
	In this example $\Ag$ is supersolvable but $\Au$ is not. Let $\g$ be a path graph on $n\geq 3$ vertices with $\B$ consisting of both ends of the path. In Example \ref{eg:pathcp} we computed
	\[\pcp(t)=\prod_{r=1}^{k-1} (t+\zeta^r-1),\]
	where $\zeta\in \C$ is a primitive $k$th root of unity. At most one root of $\pcp$ is a positive integer. Hence when $n\geq 4$, $\Au$ is not supersolvable for any boundary data $ u$. Alternatively, it is easy to see that no vertex in the cycle graph $\mg$ is simplicial.  
\end{eg}

\section{Master functions and electrical networks}
\label{sec:master}

 Given an arrangement $\AA$ in $\R^d$, let $\AA_\C$ be the arrangement in $\C^d$ defined by $Q(\AA_\C)=Q(\AA)$, where $Q(\AA_\C)$ is considered as a polynomial over $\C$. In other words, $\AA_\C=\AA\otimes_\R\C$ is the complexification of $\AA$. We think of $M(\AA)=M(\AA_\C)\cap \R^d$ as the set of real points of $M(\AA_\C)$.

\begin{mydef}
	Let $\AA$ be an arrangement of $k$ hyperplanes in $\R^d$. The \emph{master function} of $\AA$ with weights $\eng\in \C^k$ is the multivalued function $\mf:M(\AA_\C)\to \C$ given by
\begin{equation}
\mf(x) = \sum_{r=1}^k \eng_r\log f_r(x),
\end{equation}
where the $f_r$ are the defining functions of $\AA$.
\end{mydef}

\begin{mydef}
	A point $x\in M(\AA_\C)$ is a \emph{critical point} of the master function $\mf$ if $\grad{\mf}{x} = 0$. That is,
\begin{equation}
\sum_{r=1}^k \frac{\partial f_r}{\partial x_i} \frac{\eng_r}{f_r(x)}=0
\label{eq:cpeq}
\end{equation}
for all $i=1,\ldots,d$.
\label{def:crit}
\end{mydef}

Definition \ref{def:crit} makes sense because the difference of any two branches of $\mf$ is a constant function. Moreover the critical points of $\mf$ are independent of the choices of $f_r$. As $i$ ranges over $1,\ldots,d$, the equations \eqref{eq:cpeq} are sometimes called the \emph{Bethe Ansatz equations} for $\mf$ (see \cite[Section 12.1]{sottile2011}). We denote the set of critical points of $\mf$ by $\VV$.

The term \emph{master function} sometimes refers to the product $x\mapsto \exp(\mf(x))$ of powers of affine functionals. Proposition \ref{prop:varchenko} below is due to Varchenko \cite{varchenko1995} and is foundational in the study of master functions. Given $S\subset \C^d$ and a list of mutually disjoint sets $A_1,\ldots,A_\ell\subset \C^d$, we say that the elements of $S$ form a \emph{system of distinct representatives} for the sets $A_1,\ldots,A_\ell$ if $|S|=\ell$ and $S\cap A_r$ is nonempty for all $r=1,\ldots,\ell$.

\begin{prop}[{\cite[Theorem 1.2.1]{varchenko1995}}]
	Let $\AA$ be a real essential arrangement and $\eng\in (0,\infty)^{\AA}$. The critical points of the master function $\mf$ form a system of distinct representatives for the bounded chambers of $\AA$.
	\label{prop:varchenko}
\end{prop}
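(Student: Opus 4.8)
The plan is to study $\mf$ one chamber at a time through the real-valued function $\phi_C(x)=\sum_{r=1}^{k}\eng_r\log|f_r(x)|$ defined on each chamber $C$ of $\AA$. On $C$ every $f_r$ has constant sign, so $\phi_C$ is a genuine smooth real function there, and since $\nabla\phi_C=\sum_r \eng_r\, a_r/f_r(x)$ (writing $a_r$ for the normal vector of the $r$th hyperplane), comparison with \eqref{eq:cpeq} shows that the critical points of $\phi_C$ are exactly the real critical points of $\mf$ lying in $C$. The Hessian of $\phi_C$ is $-\sum_{r=1}^{k}\eng_r\, a_r a_r^{\top}/f_r(x)^2$. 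Because $\eng_r>0$ and $\AA$ is essential, the vectors $a_r$ span $\R^d$, so this Hessian is negative definite at every point of $C$; hence each $\phi_C$ is strictly concave.

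First I would handle the bounded chambers. If $C\in\BC(\AA)$ then $\overline{C}$ is compact, and as $x$ approaches $\partial C$ some $f_r(x)\to 0$, forcing $\phi_C(x)\to-\infty$. Therefore $\phi_C$ attains a maximum at an interior point, and by strict concavity this is its unique critical point in $C$. Thus every bounded chamber contains exactly one critical point of $\mf$, and it is a nondegenerate maximum. Next I would show that an unbounded chamber contains none. After rescaling each $f_r$ by a nonzero constant (which leaves \eqref{eq:cpeq} unchanged) we may assume $f_r>0$ on $C$. Any nonzero recession direction $v$ of $C$ then satisfies $a_r\cdot v\ge 0$ for all $r$, and since the $a_r$ span $\R^d$ not all of these can vanish. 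Along the ray $t\mapsto x+tv$ the derivative of $\phi_C$ is $\sum_r \eng_r (a_r\cdot v)/f_r(x+tv)>0$, so $\phi_C$ is strictly increasing and has no interior critical point. Consequently the real critical points of $\mf$ are in bijection with $\BC(\AA)$, one per bounded chamber, giving $|\VV|\ge|\BC(\AA)|$ together with distinctness.

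The remaining, and main, obstacle is to rule out non-real critical points, i.e.\ to prove the matching upper bound $|\VV|\le|\BC(\AA)|$; combined with the previous paragraph this forces $|\VV|=|\BC(\AA)|$ and makes $\VV$ a system of distinct representatives for the bounded chambers. For this I would invoke the enumeration of critical points of a master function: for weights off the resonance locus---in particular for every $\eng\in(0,\infty)^{\AA}$---the number of critical points in $M(\AA_\C)$, counted with multiplicity, equals $|\chi_{\AA}(1)|$, which by Proposition \ref{prop:zas} is exactly $|\BC(\AA)|$. Since the concavity argument already exhibits $|\BC(\AA)|$ distinct real critical points, each nondegenerate and hence of multiplicity one, these exhaust $\VV$. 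I expect this global count (via the Orlik--Terao algebra, or equivalently an Euler-characteristic computation for the complement $M(\AA_\C)$) to be the technical heart of the proof, whereas the per-chamber convexity analysis is elementary once essentiality of $\AA$ and positivity of the weights are in hand.
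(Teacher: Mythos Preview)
The paper does not supply its own proof of this proposition: it is quoted from Varchenko \cite{varchenko1995}, and the only further remark is the pointer ``For a short, elementary proof of Proposition~\ref{prop:varchenko}, see \cite[\S9.2]{sottile2011}.'' So there is nothing in the paper to compare your argument against.

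That said, your outline is the standard one and is essentially what one finds in the references. The strictly-concave analysis on each chamber is correct: essentiality makes the Hessian negative definite, so every bounded chamber carries a unique nondegenerate maximum, and your recession-direction computation correctly eliminates critical points from unbounded chambers (the directional derivative $\nabla\phi_C(x)\cdot v>0$ at every point of $C$ already forces $\nabla\phi_C(x)\neq0$, so you do not need to follow the whole ray).

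The one place to be careful is the final step. You invoke the Orlik--Terao count ``for weights off the resonance locus---in particular for every $\eng\in(0,\infty)^{\AA}$,'' but the assertion that every positive weight vector is non-resonant is precisely what Varchenko's theorem establishes; quoting it as a hypothesis risks circularity. The clean way to phrase the upper bound is to use the version that holds for \emph{all} weights with zero-dimensional critical locus: the length of the critical scheme equals $(-1)^d\chi(M(\AA_\C))=|\chi_\AA(1)|$ whenever that scheme is finite. Your concavity argument already produces $|\BC(\AA)|$ nondegenerate (hence isolated, multiplicity-one) real points; once you know the critical set is finite, the length bound forces these to be everything. Verifying finiteness for positive $\eng$ still requires a short additional argument (e.g.\ semicontinuity from nearby generic weights, or a direct check that the gradient map is finite onto its image), so you should flag that rather than fold it silently into ``off the resonance locus.''
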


For a short, elementary proof of Proposition \ref{prop:varchenko}, see \cite[\S9.2]{sottile2011}.

\subsection{Laplacians and master functions}

If every hyperplane in $\AA$ contains the origin, then the defining functions $f_r$ of $\AA$ are homogeneous. In this case we let $L=L(\con)$ be the $d\times d$ matrix in the usual basis of $\R^d$ with
\begin{equation}
x^T Lx=\sum_{r=1}^k \con_r f_r(x)^2
\label{eq:quadform}
\end{equation}
for all $x\in \R^d$, where $x^T$ is the transpose of $x$. We call $L$ the \emph{Laplacian matrix} of $\AA$ with weights $\con$. Our terminology is explained by Example \ref{eg:gralap} below, which features in the remainder of this section.

\begin{eg}
	We let $\lg=\lg(\con)$ denote the Laplacian matrix of the graphic arrangement $\Ag$ with weights $\con$. Here $\lg$ is just the weighted Laplacian matrix of $\g$, where each edge $e$ is weighted by $\con_e$. Entrywise, we have
	\begin{equation*}
		[\lg]_{ij} =
		\begin{cases}
			\sum_{v\sim i} \con_{iv} &\mbox{if }i=j\\ -\con_{ij}&\mbox{if }i\sim j\\ 0&\mbox{else}.
		\end{cases}
	\end{equation*}
	The quadratic form associated with $\lg$ is
	\begin{equation*}
		x^T \lg x=\sum_{ij\in E} \con_{ij} (x_i-x_j)^2
	\end{equation*}
	for all $x\in \R^d$. If $\g$ as an electrical network with conductances $\con\in (0,\infty)^k$, and voltages $x$, then $x^T\lg x$ is the total energy dissipated by the network.  
	
	\label{eg:gralap}
\end{eg}

One can think of $\mf$ as a (weighted) \emph{logarithmic barrier function}. Hessian matrices of logarithmic barrier functions play an important role in interior point methods (see, e.g., \cite{nesterov1994}). The next proposition connects Laplacian matrices of an arrangement $\AA$ to gradients and Hessian matrices of master functions of $\AA$. Let $\ef:\C^k\times \C^d\to \C^k$ be given by
\[\ef(\con,x)=(\con_1 f_1(x)^2,\ldots,\con_k f_k(x)^2).\]
We write $\Psi = \ef$. For suitable functions $g$, we let $\hess{g}{x}$ denote the Hessian matrix of $g$, evaluated at $x$. 

\begin{prop}
	If every hyperplane in $\AA$ contains the origin and $\eng=\Psi(\con,x)$ for some $x\in M(\AA_\C)$, then $\grad{\mf}{x}=Lx$ and $\hess{\mf}{x}=-L$.
	\begin{proof}
		First, notice that
		\begin{equation}
		L_{ij} = \sum_{r=1}^k \frac{\partial f_r}{\partial x_i} \frac{\partial f_r}{\partial x_j}\con_r.
		\label{eq:lapdiff}
		\end{equation}
		If $\eng=\Psi(\con,x)$ for some $x\in M(\AA_\C)$, then
		\begin{align*}
		\frac{\partial}{\partial x_i} \mf(x)
		&= \sum_{r=1}^k \frac{\partial f_r}{\partial x_i} \frac{\eng_r}{f_r(x)}\\
		&=\sum_{r=1}^k \frac{\partial f_r}{\partial x_i} \con_rf_r(x)\\
		&=\sum_{r=1}^k  \frac{\partial f_\ell}{\partial x_i} \con_r\left(\sum_{j=1}^d \frac{\partial f_r}{\partial x_j}x_j\right) \\
		&=\sum_{j=1}^d \left(\sum_{r=1}^k \frac{\partial f_r}{\partial x_i} \frac{\partial f_r}{\partial x_j}\con_r \right)x_j,
		\end{align*}
		so $\grad{\mf}{x}=Lx$ by \eqref{eq:lapdiff}. By a similar argument we also have
		\begin{equation*}
		\frac{\partial^2}{\partial x_i\partial x_j} \mf(x) = -L_{ij},
		\end{equation*}
		as desired.
	\end{proof}
	\label{prop:hesslap}
\end{prop}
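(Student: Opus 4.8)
The plan is to reduce everything to two elementary facts about the defining forms $f_r$: that each $f_r$ is homogeneous of degree one, which holds precisely because every hyperplane of $\AA$ passes through the origin, and that consequently each partial derivative $\partial f_r/\partial x_i$ is a constant. First I would record the entrywise description of the Laplacian. Differentiating the defining quadratic form \eqref{eq:quadform} twice and using that the $f_r$ are linear yields
\[
L_{ij}=\sum_{r=1}^k \con_r\,\frac{\partial f_r}{\partial x_i}\,\frac{\partial f_r}{\partial x_j},
\]
since $\partial_i\partial_j\bigl(\sum_r \con_r f_r(x)^2\bigr)=2\sum_r \con_r (\partial f_r/\partial x_i)(\partial f_r/\partial x_j)$ and $L$ is symmetric. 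This is the identity against which both computations below will be matched.

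For the gradient, I would differentiate $\mf$ directly, obtaining $\partial_i \mf(x)=\sum_r \eng_r f_r(x)^{-1}\,\partial f_r/\partial x_i$. Substituting the hypothesis $\eng_r=\con_r f_r(x)^2$ cancels one power of $f_r(x)$, leaving $\sum_r \con_r f_r(x)\,\partial f_r/\partial x_i$. The crucial step is to invoke Euler's relation for the homogeneous form $f_r$, namely $f_r(x)=\sum_j (\partial f_r/\partial x_j)\,x_j$; after this substitution the expression becomes $\sum_j\bigl(\sum_r \con_r (\partial f_r/\partial x_i)(\partial f_r/\partial x_j)\bigr)x_j$, which by the Laplacian identity above equals $(Lx)_i$. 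Hence $\grad{\mf}{x}=Lx$.

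For the Hessian I would differentiate the gradient one more time, treating the weights $\eng_r$ as the fixed constants they are — this is the point to emphasize, since $\eng=\Psi(\con,x)$ is chosen once at the distinguished point $x$ and does not vary with the coordinate we differentiate in. The quotient rule then produces $\partial_i\partial_j \mf=-\sum_r \eng_r f_r(x)^{-2}\,(\partial f_r/\partial x_i)(\partial f_r/\partial x_j)$, with no extra terms because the numerators $\partial f_r/\partial x_i$ are constant. Evaluating at the distinguished $x$ and again substituting $\eng_r=\con_r f_r(x)^2$ cancels the $f_r(x)^{-2}$ exactly, returning $-L_{ij}$, so $\hess{\mf}{x}=-L$.

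There is no serious obstacle here; the content is a careful bookkeeping exercise. The one place where care is genuinely required is keeping straight the two roles played by the hypotheses: homogeneity (from the hyperplanes containing the origin) is what supplies Euler's identity in the gradient computation and the constancy of $\partial f_r/\partial x_i$ throughout, while the weight condition $\eng_r=\con_r f_r(x)^2$ is what makes the powers of $f_r(x)$ cancel in both the first- and second-order computations. Treating the fixed weights $\eng$ as functions of the evaluation variable would erroneously introduce product-rule terms in the Hessian, so I would flag explicitly that $\eng$ is held constant.
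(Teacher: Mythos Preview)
Your proposal is correct and follows essentially the same route as the paper: both derive the entrywise formula $L_{ij}=\sum_r \con_r(\partial f_r/\partial x_i)(\partial f_r/\partial x_j)$, compute the gradient by substituting $\eng_r=\con_r f_r(x)^2$ and applying Euler's identity $f_r(x)=\sum_j(\partial f_r/\partial x_j)x_j$, and obtain the Hessian by one further differentiation. Your write-up is in fact more explicit than the paper's, which dispatches the Hessian with ``by a similar argument'' where you spell out why no product-rule terms appear and why the powers of $f_r(x)$ cancel.
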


\begin{cor}[Principal Minors Matrix-Tree Theorem]
	Let $\overline{\g}$ be the multigraph obtained by identifying all boundary nodes of $\g$ as a single vertex. If $\kg$ is the principle submatrix of $\lg$ with rows and columns indexed by $\inte$, then $\det(\kg)\in \R[\con_e : e\in E]$ is the generating polynomial of the set of spanning trees of $\overline{\g}$.
	
	\begin{proof}
		This follows from Proposition \ref{prop:hesslap} and \cite[Theorem 3.1]{varchenko2006}.
	\end{proof}
\end{cor}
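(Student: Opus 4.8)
The plan is to recognize $\kg$ as a reduced weighted Laplacian of the multigraph $\overline{\g}$ and then invoke the weighted Matrix-Tree Theorem. First I would fix notation: let $*$ denote the single vertex of $\overline{\g}$ obtained by identifying all boundary nodes, so that $\overline{\g}$ has vertex set $\inte\cup\{*\}$ and its edges are exactly the edges of $\g$, each boundary endpoint relabeled as $*$. Writing $L_{\overline{\g}}$ for the weighted Laplacian of $\overline{\g}$ with edge weights $\con$, the central claim is that $\kg$ coincides with the principal submatrix of $L_{\overline{\g}}$ obtained by deleting the row and column indexed by $*$.

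Next I would verify this claim entrywise against the explicit formula for $\lg$ in Example \ref{eg:gralap}. For distinct $i,i'\in\inte$, identifying boundary nodes creates no new edge between interior vertices, so the off-diagonal entry $-\con_{ii'}$ is unchanged. For the diagonal entry at $i\in\inte$, the degree term $\sum_{v\sim i}\con_{iv}$ is likewise unaffected, since relabeling as $*$ the boundary endpoints of edges incident to $i$ alters neither the set of edges at $i$ nor their weights. Hence $[\kg]_{ii}$ equals the $(i,i)$ entry of $L_{\overline{\g}}$, confirming that $\kg$ is precisely the reduced Laplacian of $\overline{\g}$ relative to $*$.

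With the claim in hand, the conclusion follows from the weighted Matrix-Tree Theorem: for a connected multigraph, the determinant of its Laplacian with any one vertex's row and column deleted equals $\sum_T \prod_{e\in T}\con_e$, summed over spanning trees $T$. Since $\g$ is connected, so is $\overline{\g}$, and deleting the row and column of $*$ is exactly the reduction realizing $\kg$; therefore $\det(\kg)$ is the spanning-tree generating polynomial of $\overline{\g}$, as asserted.

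The main obstacle is really a point requiring care rather than genuine difficulty: the multigraph bookkeeping in the diagonal entries. Because $\B$ is independent in $\g$, no edge joins two boundary nodes, so the identification produces no self-loops at $*$ (and self-loops would not affect the Laplacian in any case), and the only multiplicities that arise are the parallel $*$-edges coming from a single interior vertex adjacent to several boundary nodes. Confirming that these parallel edges are counted with the correct multiplicity and weight in both $\kg$ and $L_{\overline{\g}}$ is the crux that makes the identification of $\kg$ with the reduced Laplacian rigorous. (An alternative route, which the paper in fact takes, bypasses this combinatorial check by deriving the result from the Hessian description in Proposition \ref{prop:hesslap}.)
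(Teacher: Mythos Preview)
Your argument is correct. Identifying $\kg$ entrywise with the reduced Laplacian of $\overline{\g}$ at the collapsed vertex $*$ is exactly the right move, and the verification you give for the diagonal and off-diagonal entries is sound; the observation that $\B$ is independent, so no self-loops arise at $*$, is a nice point of hygiene even though self-loops would be harmless for the Laplacian. Once that identification is made, the classical weighted Matrix-Tree Theorem finishes the job.

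This is a genuinely different route from the paper's. The paper does not touch the combinatorics of $\overline{\g}$ directly: instead it invokes Proposition~\ref{prop:hesslap} to realize $\kg$ (up to sign) as the Hessian of a master function of $\Au$, and then appeals to a result of Varchenko computing such Hessian determinants in terms of the arrangement. Your argument is more elementary and entirely self-contained, requiring nothing beyond the standard Matrix-Tree Theorem; the paper's argument is shorter on the page but outsources the work to an external theorem and situates the statement within the master-function framework that drives Section~\ref{sec:master}. Either is perfectly acceptable; yours would be the natural choice in a graph-theory exposition, while the paper's fits its emphasis on hyperplane arrangements and master functions.
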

	
\subsection{Discrete harmonic functions}
\label{sec:ddp}

Let $\con,\eng\in \C^k$ be indexed by $E$. We adopt the language of electrical networks, calling $\con$ the \emph{conductances} and $\eng$ the \emph{energies}. We also refer to the entries $\con_e$ (resp., $\eng_e$) collectively as the \emph{conductances} (resp., \emph{energies}). For more on electrical networks, see \cite[Chapter 3]{curtis2000}.

Recall the Laplacian matrix $\lg$ introduced in Example \ref{eg:gralap}. Let $\kg=\kg(\con)$ denote the submatrix of $\lg$ obtained by deleting all rows and columns indexed by $\B$. If $\con\in (0,\infty)^k$, then there is a unique minimizer of $x^T\lg x$ in $\{x\in \R^d : x_j= u(j)\mbox{ for all }j\in\B\}$. The minimizer $x$ is characterized by the equations
\begin{equation}
	\sum_{j\sim i} \con_{ij}(x_i-x_j)=0
	\label{eq:harm}
\end{equation}
for all $i\in \inte$. For $\con\in \C^k$, a point $x\in\C^d$ satisfying \eqref{eq:harm} for all $i\in\inte$ is called a \emph{harmonic function} on $(\g, u,\con)$. When there is no ambiguity, we say simply that $x$ is \emph{harmonic}.

If a harmonic function on $(\g, u,\con)$ exists, then it is unique; we denote it by $h(\con)$. A harmonic function exists unless $\kg$ is singular, which occurs only for $\con$ in a proper algebraic subset of $\C^k$. We say that $\con$ is \emph{generic} if $\kg$ is nonsingular. In particular, every $\con\in (0,\infty)^k$ is generic.

This discussion explains the name \emph{Dirichlet arrangement} for $\Au$; finding a harmonic function on $(\g,u,\con)$ is a discrete analog of the classical Dirichlet problem on a continuous domain, and in this analogy $u$ is the \emph{Dirichlet boundary data} (see \cite[Section 1.2]{curtis2000}).

\begin{eg}
	Let $\g$ be a path graph. Let $\B$ consist of both ends of the path, and write $\con=(\con_1,\ldots,\con_k)$ with the edges ordered from one end to the other. This graph is illustrated in Figure \ref{fig:path} with edges labeled by their conductances and boundary nodes marked by white circles.
	
	\begin{figure}[ht]
		\centering
		\begin{tikzpicture}[xscale=1.7,yscale=1.7]
\coordinate (a) at (0,0);
\coordinate (b) at (1,0);
\coordinate (c) at (2,0);
\coordinate (c1) at (2.45,0);
\coordinate (c2) at (2.95,0);
\coordinate (d) at (3.4,0);
\coordinate (e) at (4.4,0);
\draw (a) -- (c1);
\draw (c2) -- (e);
\node () at (2.7,0) {$\cdots$};
\node[below] () at (0.5,-0.05) {$\con_1$};
\node[below] () at (1.5,-0.05) {$\con_2$};
\node[below] () at (3.9,-0.05) {$\con_k$};
\draw plot [smooth, tension = 1] coordinates { (2.475,0.06) (2.425,0.02) (2.475,-0.02) (2.425,-0.06)};
\draw plot [smooth, tension = 1] coordinates { ({2.475+0.5},0.06) ({2.425+0.5},0.02) ({2.475+0.5},-0.02) ({2.425+0.5},-0.06)};
\draw[fill=white] (a) circle (2pt);
\draw[fill=black] (b) circle (2pt);
\draw[fill=black] (c) circle (2pt);
\draw[fill=black] (d) circle (2pt);
\draw[fill=white] (e) circle (2pt);
\end{tikzpicture}
		\caption{A path graph with edge weights labeled and boundary nodes marked in white.}
		\label{fig:path}
	\end{figure}
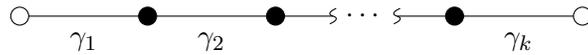
	
	Here the matrices $\lg$ and $\kg$ are symmetric and tridiagonal. For instance, when $k=5$ we have
	\begin{equation*}
		\kg =
		\begin{pmatrix}
			\con_1+\con_2&-\con_2&&\\
			-\con_2&\con_2+\con_3&-\con_3&\\
			&-\con_3&\con_3+\con_4&-\con_4\\
			&&-\con_4&\con_4+\con_5
		\end{pmatrix}.
	\end{equation*}
	One can use the recurrences in \cite{usmani1994} to compute $\kg^{-1}$ in terms of elementary symmetric polynomials: for $1\leq r\leq s\leq n$,
	\begin{equation}
		[\kg^{-1}]_{rs}=
		\frac{e_{r-1}(\con_1,\ldots,\con_i)e_{s-r}(\con_{i+1},\ldots,\con_j)e_{n-s}(\con_{j+1},\ldots,\con_k)}{e_n(\con_1,\ldots,\con_k)},
		\label{eq:grepath}
	\end{equation}
	where $e_0=1$. In particular, $\det \kg = e_n(\con_1,\ldots,\con_k)$, so $\con$ is generic in this example if and only if $e_n(\con_1,\ldots,\con_k)\neq 0$.
	
	When $\con=(1,\ldots,1)$ we have
	\begin{equation*}
		\kg =
		\begin{pmatrix}
			2 & -1 & & &\\
			-1&2&-1&&\\
			&-1&\ddots&\ddots&\\
			& &\ddots & \ddots &-1\\
			&&& -1 & 2
		\end{pmatrix}.
	\end{equation*}
	This matrix arises as the Cartan matrix of the root system $\mathsf{A}_n$, and as the matrix of coupling coefficients of $d$ harmonic oscillators in a linear chain (see, e.g., \cite[\S11.4]{humphreys1972} and \cite[Exercise 4.2]{mello2004}). It also plays a role in other boundary value problems on path graphs \cite{bendito2009,carmona2013}. Chung and Yau computed \eqref{eq:grepath} in this case:
	\begin{equation*}
		[\kg^{-1}]_{rs} = \frac{r(k-s)}{k}
	\end{equation*}
	for all $1\leq r,s\leq n$ \cite[Theorem 3]{chung2000}. 
	
	\label{eg:path}
\end{eg}

\subsection{Fixed-energy harmonic functions}
\label{sec:mfu}

We now connect master functions of $\Au$ to harmonic functions on $(\g, u)$. Given $\eng\in \C^k$, we write
\begin{align*}\efu&=\Psi_{\Au}\\
\mfu&=\Phi_{\Au}^{\eng}\\
\VVu &= \mathcal{V}(\Au,\eng).
\end{align*}
We continue to assume that $\g$ is connected and $\B$ is nonempty. If $\con$ is generic in the sense of Section \ref{sec:ddp}, then we write 
\[\efu(\con)=\efu(\con,h(\con)).\]

\begin{eg}
	Let $(\g,\B)=\comp{m}{n}$ as in Example \ref{eg:compjoin}. Fix positive integers $\ell_j$ for all $j\in \B$. Let $\eng\in \C^k$ be given for all $ij\in E$ by
	\[\eng_{ij}=\begin{cases}2&\mbox{if }i,j\in \inte\\-\ell_j &\mbox{if }j\in\B.\end{cases}\]
	Here we have
	\begin{equation*}
		\mfu(x) = \sum_{\{i,j\}\subset \inte} 2\log(x_i-x_j)-\sum_{\substack{i\in\inte \\ j\in\B}}  \ell_j\log (x_i-u(j)) .
	\end{equation*}
	This master function plays a crucial role in the construction of hypergeometric solutions of the $\mathfrak{sl}_2$ Knizhnik--Zamolodchikov equations \cite{mukhin2004,schechtman1991,scherbak2003}. Since the components of $\eng$ are not all positive, the structure of $\VVu$ is not settled by Proposition \ref{prop:varchenko}. In fact, the qualitative behavior of the critical points of $\mfu$ changes as $n$, $m$ and $\eng$ are allowed to vary. This is shown in \cite{scherbak2003} by characterizing the critical points of $\mfu$ in terms of polynomial solutions of Fuchsian differential equations.  
\end{eg}
	
	\begin{proof}[Proof of Theorem \ref{thm:cpchar}]
		Let $z\in \C^n$ extend $ u$, and fix $\eng\in (0,\infty)^k$. We must show that $z$ is $\eng$-harmonic on $(\g, u)$ if and only if $z\in \VVu$. Suppose first that $z$ is $\eng$-harmonic on $(\g, u)$. Then there is $\con\in \C^k$ such that $h(\con)=z$ and $\efu(\con)=\eng$. Thus for all $i\in \inte$ we have
		\begin{equation}
		0=\sum_{j\sim i} \con_{ij}(z_i-z_j) = \sum_{j\sim i} \frac{\eng_{ij}}{z_i-z_j} = \frac{\partial}{\partial x_i} \mfu(z),
		\label{eq:mainproof}
		\end{equation}
		where we have used the definition of $\efu$. Hence $z\in \VVu$.
		
		Conversely, suppose that $z\in \VVu$, and let $\gamma\in \C^k$ be given by $\gamma_{ij}=\eng_{ij}/(z_i-z_j)^2$ for all $ij\in E$. It is not hard to see that \eqref{eq:mainproof} holds again for all $i\in \inte$, so $h(\gamma)=z$ and moreover $\efu(\con)=\eng$. Hence $z$ is $\eng$-harmonic on $(\g, u)$.
	\end{proof}

	It seems likely that the following corollary is known in some form, given the extensive literature on electrical networks. However, we have not seen it stated as such.
	
	\begin{cor}
		Every point in every bounded chamber of $\Au$ is a harmonic function on $(\g, u,\con)$ for some choice of conductances $\con\in (0,\infty)^E$.
		\label{cor:everything}
		
		\begin{proof}
			This is an application of \cite[Theorem 3.3]{pressman2001} to Theorem \ref{thm:cpchar}.
		\end{proof}
	\end{cor}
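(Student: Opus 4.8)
The plan is to fix a bounded chamber $C$ of $\Au$ and a point $z\in C$, and to produce conductances $\con\in(0,\infty)^E$ with $h(\con)=z$, routing the existence of such $\con$ through Theorem \ref{thm:cpchar} and the realizability result \cite[Theorem 3.3]{pressman2001}. First I would record the elementary fact that $z\in M(\Au)$ forces $z_i\neq z_j$ for every edge $ij\in E$, so $z$ is a real vertex function extending $u$ and taking distinct values across each edge; since every $\con\in(0,\infty)^E$ is generic in the sense of Section \ref{sec:ddp}, each candidate $\con$ has a well-defined harmonic function $h(\con)$, and the task is exactly to hit $z$.

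Next I would extract the combinatorial content of $z$ lying in a \emph{bounded} chamber. By Theorem \ref{thm:chamberbij} the orientation $\oo(C)$, which orients $\vec{ij}$ precisely when $z_i>z_j$, is compatible, so it lies in $\com$ and has neither a source nor a sink in $\inte$. Translated back to $z$, this says that no interior vertex is a strict local maximum or minimum: each $i\in\inte$ has a neighbor $j$ with $z_j>z_i$ and a neighbor $j'$ with $z_{j'}<z_i$. This is the natural hypothesis for realizing $z$ as a harmonic function with strictly positive conductances, as the discrete maximum principle forces any such $z$ to avoid interior extrema. In parallel I would reformulate the goal analytically: seeking $\con$ with $h(\con)=z$ is the same as solving, for each $i\in\inte$, the balance equation $\sum_{j\sim i}\con_{ij}(z_i-z_j)=0$ with $\con\in(0,\infty)^E$, and writing $\eng_{ij}=\con_{ij}(z_i-z_j)^2$ turns this, via \eqref{eq:mainproof}, into the assertion that some positive weight vector $\eng$ makes $z$ a critical point of $\mfu$, i.e.\ $z\in\VVu$. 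Because $(z_i-z_j)^2>0$, positivity of $\con$ and of $\eng$ are equivalent, and by Theorem \ref{thm:cpchar} this is the same as $z$ being $\eng$-harmonic.

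Finally I would apply \cite[Theorem 3.3]{pressman2001} with the no-interior-extremum hypothesis verified in the second step to obtain strictly positive conductances $\con$ realizing $z$ as harmonic; setting $\eng=\efu(\con)\in(0,\infty)^E$ and appealing to Theorem \ref{thm:cpchar} then certifies $z\in\VVu$ and completes the proof. The main obstacle I anticipate is aligning the precise statement of Pressman's theorem with the present hypotheses: confirming that its input (a boundary-data function with no interior strict extrema, equivalently a compatible orientation in $\com$) yields conductances that are \emph{strictly} positive rather than merely nonnegative, and checking that the connectivity and independence assumptions standing throughout the paper match those under which \cite[Theorem 3.3]{pressman2001} is proved. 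Once that matching is made, the remaining steps are purely formal manipulations already recorded in \eqref{eq:mainproof} and Theorem \ref{thm:cpchar}.
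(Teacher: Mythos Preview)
Your proposal reaches the right conclusion, but you have inverted the roles of the two cited results and consequently inserted an unnecessary detour. In the paper's argument, \cite[Theorem 3.3]{pressman2001} is a statement about general real essential arrangements, not about electrical networks: it is a converse to Proposition \ref{prop:varchenko}, asserting that every point of every bounded chamber of such an arrangement is a critical point of the master function for \emph{some} positive weight vector. Applied to $\Au$, this immediately yields, for each bounded-chamber point $z$, an $\eng\in(0,\infty)^E$ with $z\in\VVu$. Theorem \ref{thm:cpchar} then translates this to the network side: $z$ is $\eng$-harmonic, so $z=h(\con)$ for some generic $\con$ with $\efu(\con)=\eng$, and the reasoning of Corollary \ref{cor:abrams}(i) gives $\con\in(0,\infty)^E$.

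Thus your passage through Theorem \ref{thm:chamberbij} to extract the no-interior-extremum property is not needed: Pressman's hypothesis is simply that $z$ lies in a bounded chamber, which is given. Likewise your final step---first obtaining $\con$ from Pressman and then invoking Theorem \ref{thm:cpchar} to certify $z\in\VVu$---is backwards and redundant; if the external reference already handed you positive conductances, the corollary would be proved without Theorem \ref{thm:cpchar} at all, contrary to the paper's phrasing ``an application of \cite[Theorem 3.3]{pressman2001} to Theorem \ref{thm:cpchar}''. Your middle paragraph does correctly record, via \eqref{eq:mainproof}, the equivalence between the conductance and energy formulations, so the mathematics is sound; only the interpretation of the cited reference and the logical order need to be corrected.
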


	For a fixed $\eng\in \C^E$, as $\con$ ranges over the generic conductances with $\efu(\con)=\eng$, we call the functions $h(\con)$ the \emph{$\eng$-harmonic functions} on $(\g, u)$. The results of Abrams and Kenyon \cite{abrams2016} now follow:
	
	\begin{cor}[{\cite[Theorems 1--3]{abrams2016}}]
		Fix energies $\eng\in (0,\infty)^k$, and let $\acon$ be the set of all generic conductances $\con\in \C^k$ for which $\efu(\con)=\eng$. The following hold:
		\begin{enumerate}[(i)]
			\item $\acon\subset (0,\infty)^k$
			\item The $\eng$-harmonic functions on $(\g, u)$ form a system of distinct representatives for the bounded chambers of $\Au$.
		\end{enumerate}
	\begin{proof}
		Item (ii) is an immediate consequence of Proposition \ref{prop:centess}, Proposition \ref{prop:varchenko} and Theorem \ref{thm:cpchar}. Suppose that $\con\in \acon$. For all $ij\in E$ we have $\eng_{ij}=\con_{ij}(h_i(\con)-h_j(\con))^2 >0$, where we write $h_i(\con)$ for the $i$th component of $h(\con)$. Since $h(\con)$ is a real point, it follows that $\con_{ij}>0$, proving (i).
	\end{proof}
		\label{cor:abrams}
	\end{cor}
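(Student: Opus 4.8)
The plan is to prove Corollary \ref{cor:abrams} by leveraging the dictionary established in Theorem \ref{thm:cpchar}, which identifies the $\eng$-harmonic functions on $(\g,u)$ with the critical points $\VVu$ of the master function $\mfu$. For part (ii), the strategy is almost immediate: Proposition \ref{prop:centess} guarantees that $\Au$ is essential, and since we are given $\eng\in(0,\infty)^k$, Proposition \ref{prop:varchenko} applies directly to tell us that the critical points $\VVu$ form a system of distinct representatives for the bounded chambers of $\Au$. Combining this with the identification $\VVu = \{\eng\text{-harmonic functions}\}$ from Theorem \ref{thm:cpchar} finishes (ii). The only care needed is to confirm that the hypotheses of Proposition \ref{prop:varchenko} are met---essentiality and positive real weights---both of which hold on the nose.

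For part (i), the plan is to argue that the generic conductances producing the fixed positive energies must themselves be positive. First I would take an arbitrary $\con\in\acon$ and recall that, by definition of $\acon$, we have $\efu(\con)=\eng$, so $h(\con)$ is a harmonic function with $\efu(\con,h(\con))=\eng$. Writing out the definition of $\efu=\Psi_{\Au}$ componentwise gives $\eng_{ij}=\con_{ij}(h_i(\con)-h_j(\con))^2$ for each edge $ij\in E$, where $h_i(\con)$ denotes the $i$th component of the harmonic function. The key observation is that since $\con$ lies in $\acon$ and produces a critical point lying in a bounded chamber of the \emph{real} arrangement $\Au$ (by part (ii)), the harmonic function $h(\con)$ is a real point, so each difference $h_i(\con)-h_j(\con)$ is real and its square is nonnegative. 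Because $\eng_{ij}>0$ by assumption, the relation $\eng_{ij}=\con_{ij}(h_i(\con)-h_j(\con))^2$ forces $\con_{ij}>0$ (and in particular the square is strictly positive, so no division issue arises). This establishes $\acon\subset(0,\infty)^k$.

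The main obstacle I anticipate is the subtle point of ensuring that $h(\con)$ is genuinely real for $\con\in\acon$, since a priori $\con$ ranges over \emph{complex} conductances. The resolution is that part (ii) has already pinned down the $\eng$-harmonic functions as representatives of bounded chambers of $\Au$, which live in the real affine slice $\XX\cong\R^n$; thus each such harmonic function is a real point. This is why I would prove (ii) first and then deduce (i), exactly the order in which the logical dependencies flow---the reality of the critical points is what converts the energy equation into a statement about signs. Everything else is a routine unwinding of the definition of $\efu$, so no heavy computation is required.
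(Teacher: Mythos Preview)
Your proposal is correct and follows essentially the same approach as the paper: part (ii) is obtained by combining Proposition \ref{prop:centess}, Proposition \ref{prop:varchenko}, and Theorem \ref{thm:cpchar}, and part (i) follows from the energy relation $\eng_{ij}=\con_{ij}(h_i(\con)-h_j(\con))^2$ together with the reality of $h(\con)$. Your explicit justification that $h(\con)$ is real---via part (ii) placing it inside a real bounded chamber---is a helpful elaboration of a step the paper leaves implicit.
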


	In fact, there is a bijection from $\acon$ to the set of bounded chambers of $\Au$ for all $\eng$ outside a proper algebraic subset of $\C^k$. This follows, for instance, from a generalization of Proposition \ref{prop:varchenko} due to Orlik and Terao \cite{orlik1995}. Corollary \ref{cor:abrams} has applications in rectangular tilings \cite{abrams2016}. 
	
	\begin{proof}[Proof of Corollary \ref{cor:eharmcount}]
		This follows from Theorem \ref{thm:chambercount} and Corollary \ref{cor:abrams}(ii).
	\end{proof}
\appendix
\section{Totally real Galois action}
\label{sec:galois}

This appendix is dedicated to Theorem \ref{thm:gal} below, which generalizes an observation of Abrams and Kenyon \cite[Corollary 5]{abrams2016}. An algebraic number in $\R$ is called \emph{totally real} if all of its Galois conjugates over $\Q$ are real. The set $\Qtr$ of all totally real numbers is a subfield of $\R$, and the (infinite) extension $\Qtr/\Q$ is Galois.

\begin{thm}
	If $\AA$ is an essential real arrangement defined over $\Q$ and $\eng\in (0,\infty)^{\AA}$ is a rational point, then $\Gal(\Qtr/\Q)$ acts on the set of critical points of the master function $\mf$, and hence on the set of bounded chambers of $\AA$.
	
	\begin{proof}
		Let $k=|\AA|$. We have $x\in \VV$ if and only if $x$ satisfies \eqref{eq:cpeq} for all $i=1,\ldots,d$. Clearing denominators in \eqref{eq:cpeq} gives a system of polynomial equations over $\Q$:
		\begin{equation}
			\sum_{r=1}^k \frac{\partial f_r}{\partial x_i} \eng_r \prod_{s\neq r} f_s(x)=0.
			\label{eq:polysys}
		\end{equation}
		By Proposition \ref{prop:varchenko}, the system has only finitely many solutions $x\in M(\AA_\C)$, so each solution is an algebraic point.
		
		Let $\KK$ be the field generated over $\Q$ by $x_i$, as $x$ ranges over $\VV$ and $i$ ranges over $1,\ldots,d$. Replace $\KK$ by a Galois closure if necessary, and let $\sigma\in \Gal(\KK/\Q)$. Clearly if $x$ is a solution of the system \eqref{eq:polysys}, then $\sigma(x)$ is also a solution. Hence $\Gal(\KK/\Q)$ acts on $\VV$. Moreover, Proposition \ref{prop:varchenko} says that all solutions of \eqref{eq:polysys} are real, so $\KK\subset \Qtr$. The result follows.
	\end{proof}
	\label{thm:gal}
\end{thm}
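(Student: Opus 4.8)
The plan is to prove Theorem \ref{thm:gal} by showing that the critical point equations cut out a $\Q$-rational, zero-dimensional variety whose points are totally real, and then observing that the absolute Galois group of $\Q$ permutes these points. First I would fix $k=|\AA|$ and recall from Definition \ref{def:crit} that $x\in\VV$ precisely when the Bethe Ansatz equations \eqref{eq:cpeq} hold for all $i$. Because $\eng$ is rational and $\AA$ is defined over $\Q$, the defining functions $f_r$ have rational coefficients, so after clearing the denominators $f_r(x)$ in \eqref{eq:cpeq} I obtain the polynomial system \eqref{eq:polysys} with coefficients in $\Q$. The key structural input is Proposition \ref{prop:varchenko}: since $\AA$ is essential and $\eng\in(0,\infty)^{\AA}$, the critical points form a system of distinct representatives for the bounded chambers of $\AA$, hence there are only finitely many of them in $M(\AA_\C)$. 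Finiteness is what guarantees that every coordinate $x_i$ of every critical point is an algebraic number, so $\VV$ consists of algebraic points.

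Next I would assemble the Galois action. Let $\KK$ be the field generated over $\Q$ by all coordinates of all points of $\VV$; since $\VV$ is finite, $\KK/\Q$ is a finite extension, and after passing to a Galois closure I may assume it is Galois. For any $\sigma\in\Gal(\KK/\Q)$, applying $\sigma$ coordinatewise to a solution of the $\Q$-rational system \eqref{eq:polysys} yields another solution, because $\sigma$ fixes the rational coefficients; thus $\sigma$ permutes the finite solution set, giving an action of $\Gal(\KK/\Q)$ on $\VV$. Through the bijection of Proposition \ref{prop:varchenko} this transports to an action on the bounded chambers of $\AA$.

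The remaining point — and the one I expect to be the crux — is upgrading this to an action of $\Gal(\Qtr/\Q)$, i.e. showing the coordinates are not merely algebraic but \emph{totally real}. Here I would again lean on Proposition \ref{prop:varchenko}: it asserts that the critical points are genuine real points, one in each (real) bounded chamber. But more is needed, since a single embedding being real does not force total reality. The clean way to see total reality is that every Galois conjugate of a critical point is again a critical point (by the permutation action just established) and every critical point is real; equivalently, all roots of the system \eqref{eq:polysys}, which are exactly the $\Gal(\KK/\Q)$-orbit closure of any one coordinate, lie in $\R$. Hence each coordinate's minimal polynomial over $\Q$ splits into real roots, so $\KK\subset\Qtr$. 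With $\KK\subset\Qtr$ and $\KK/\Q$ Galois, the action of $\Gal(\KK/\Q)$ is the restriction of the natural action of $\Gal(\Qtr/\Q)$, and the theorem follows. The one subtlety to handle carefully is the logical order: I must first know all solutions of \eqref{eq:polysys} are real (the totally-real claim) \emph{before} concluding $\KK\subset\Qtr$, and for this the full force of Proposition \ref{prop:varchenko} — that the critical points are in bijection with the bounded chambers and therefore \emph{all} real — is exactly what rules out spurious complex solutions.
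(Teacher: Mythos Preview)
Your proposal is correct and follows essentially the same line as the paper's proof: clear denominators to obtain a $\Q$-rational polynomial system, invoke Proposition~\ref{prop:varchenko} for finiteness and reality of the critical set, let $\Gal(\KK/\Q)$ act on solutions, and conclude $\KK\subset\Qtr$. You are in fact a bit more explicit than the paper in justifying total reality (via Galois-stability of $\VV$ combined with reality of every critical point), which is a welcome clarification rather than a departure.
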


	When $\AA=\Au$, Theorem \ref{thm:gal} gives an action of $\Gal(\Qtr/\Q)$ on the set $\com$ of compatible orientations of $(\g, u)$ for each rational point $\eng\in(0,\infty)^k$. Abrams and Kenyon conjectured in this case that if $\g$ is 3-connected, then the action is transitive given sufficiently general choices of $u$ and $\eng$ \cite[Conjecture 1]{abrams2016}. Theorem \ref{thm:gal} suggests that a similar statement might hold for any sufficiently ``robust'' arrangement $\AA$. Proposition \ref{prop:wheel} below describes an example in which $\g$ is 3-connected but the corresponding action is \emph{not} transitive.
	
	\begin{prop}
		Let $\g$ be a wheel graph on $d\equiv 3\pmod{4}$ vertices, and let $\B$ consist of 2 opposite vertices on the outer cycle of the wheel. Fix rational boundary data $u$, and let $\eng\in \C^k$ be identically 1. If $d>3$, then the action of $\Gal(\Qtr/\Q)$ on the set of $\eng$-harmonic functions on $(\g, u)$ is not transitive.
		\label{prop:wheel}
		
		\begin{proof}
			Label the outer vertices of $\g$ in a cycle by $i_0,\ldots,i_{d-2}$, and write $d=4\ell-1$. Without loss of generality, suppose that $\B=\{i_0,i_{2\ell-1}\}$ with boundary values $ u(i_0)=1$ and $ u(i_{2\ell-1})=-1$. We exhibit an $\eng$-harmonic function $f\in\Q^{n-2}$ on $(\g, u)$. Such a function is necessarily fixed by the action of $\Gal(\Qtr/\Q)$. Theorem \ref{thm:chambercount} gives $|\com|=(2\ell-1)^2$, so the result will follow.
			
			For $r=1,\ldots,\ell-1$ let
			\begin{equation}
				f(i_r)=\prod_{s=0}^{r-1} \frac{2(\ell-2s)-1}{2(\ell-2s)+1}.
				\label{eq:oddharm}
			\end{equation}
			For $r=\ell,\ldots,2\ell-2$ let $f(i_r)=-f(i_{2\ell-r-1})$, and for $r=2\ell,\ldots,4\ell-3$ let $f(i_r)=f(i_{4\ell-r-2})$. Finally, let $f$ be 0 at the center of the wheel. This defines a function $f\in \Q^n$. It is routine to verify that $f$ is $\eng$-harmonic on $(\g, u)$. The case $d=15$ is illustrated in Figure \ref{fig:wheel}.
		\end{proof}
	\end{prop}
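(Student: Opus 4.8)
The plan is to construct an explicit rational $\eng$-harmonic function and verify it directly, since exhibiting a single $\Q$-valued critical point immediately produces a fixed point of the Galois action, and a fixed point forces non-transitivity as soon as we know $|\com|>1$. First I would set up coordinates as in the proposed statement: label the outer cycle $i_0,\ldots,i_{d-2}$, put the center at $0$, write $d=4\ell-1$, and place the boundary nodes at opposite positions $i_0$ and $i_{2\ell-1}$ with $ u(i_0)=1$, $ u(i_{2\ell-1})=-1$. The symmetry of this configuration (a half-turn of the wheel sends $i_0\mapsto i_{2\ell-1}$ and negates boundary data) strongly suggests the harmonic function should be antisymmetric under that rotation, which is exactly the structure encoded by the reflection rules $f(i_r)=-f(i_{2\ell-r-1})$ and $f(i_r)=f(i_{4\ell-r-2})$.

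The core of the argument is the verification that the function $f$ defined by the product formula \eqref{eq:oddharm}, extended by these symmetry rules and vanishing at the center, satisfies the harmonicity equations \eqref{eq:harm} with all conductances equal to $1$ (so that $\eng\equiv 1$ forces $\eng_{ij}=(f_i-f_j)^2$; I would note that one still needs $f_i\ne f_j$ across each edge, i.e.\ $f$ lies in the open complement $M(\Au)$, which the strictly decreasing product guarantees on the relevant arc). For each interior outer vertex $i_r$ the equation reads $(f(i_r)-f(i_{r-1}))+(f(i_r)-f(i_{r+1}))+(f(i_r)-0)=0$, a three-term linear recurrence, since each outer vertex has two outer neighbors plus the spoke to the center; the center equation is $\sum_r f(i_r)=0$, which the antisymmetry makes automatic. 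The product \eqref{eq:oddharm} is engineered so that the ratios $f(i_{r})/f(i_{r-1})$ telescope and satisfy this recurrence; I would check the recurrence by substituting the closed form and confirming the resulting identity among the factors $\tfrac{2(\ell-2s)-1}{2(\ell-2s)+1}$, and separately check the boundary/junction equations at $r=0$ and $r=2\ell-1$ where $ u$ is imposed. This is the step I expect to be the main obstacle: it is a somewhat delicate bookkeeping verification that the single explicit formula, together with the two reflection rules, is globally consistent at every vertex — in particular at the ``seams'' $r=\ell-1,\ell$ and $r=2\ell-2,2\ell$ where the defining rule switches, and at the two boundary nodes.

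Once $f$ is confirmed to be $\eng$-harmonic, the conclusion is immediate. Since $f\in\Q^n$, Theorem~\ref{thm:gal} makes every $\sigma\in\Gal(\Qtr/\Q)$ fix $f$, so $f$ is an orbit of size $1$. Meanwhile Theorem~\ref{thm:chambercount} gives $|\com|=\beta(\mg)/(m-2)!=\beta(\mg)$ since $m=2$; computing $\beta(\mg)$ for the wheel-plus-boundary-edge graph (or equivalently counting bounded chambers via Theorem~\ref{thm:chamberbij}) yields $(2\ell-1)^2$, which exceeds $1$ whenever $\ell\ge 2$, i.e.\ $d>3$. A single transitive action would require all $\eng$-harmonic functions to lie in one orbit, so the existence of a fixed point alongside $|\com|>1$ shows the action is not transitive, completing the proof. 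I would close by remarking that Figure~\ref{fig:wheel} illustrates the $d=15$ (i.e.\ $\ell=4$) case as a sanity check on the formula.
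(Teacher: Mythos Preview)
Your overall strategy matches the paper's exactly: exhibit a rational $\eng$-harmonic function, observe it is a Galois fixed point, and combine with $|\com|=(2\ell-1)^2>1$. However, the verification step contains a genuine error that would make the argument fail as written.

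You propose to check that $f$ ``satisfies the harmonicity equations \eqref{eq:harm} with all conductances equal to $1$,'' writing the condition at an outer vertex as the \emph{linear} relation $(f(i_r)-f(i_{r-1}))+(f(i_r)-f(i_{r+1}))+f(i_r)=0$. This is not the $\eng$-harmonic condition for $\eng\equiv 1$. By Theorem~\ref{thm:cpchar} (or directly from \eqref{eq:mainproof}), an $\eng$-harmonic function is a critical point of $\mfu$, so for $\eng\equiv 1$ the equation at $i_r$ reads
\[
\frac{1}{f(i_r)-f(i_{r-1})}+\frac{1}{f(i_r)-f(i_{r+1})}+\frac{1}{f(i_r)}=0,
\]
corresponding to conductances $\con_{ij}=1/(f_i-f_j)^2$, not $\con_{ij}=1$. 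The product formula \eqref{eq:oddharm} is engineered for this reciprocal identity, and indeed fails your linear recurrence already at $r=1$ in the $\ell=4$ case: $3\cdot\tfrac{11}{13}\neq 1+\tfrac{77}{117}$, whereas $\tfrac{1}{11/13-1}+\tfrac{1}{11/13-77/117}+\tfrac{13}{11}=0$ on the nose. The center equation happens to survive this confusion because antisymmetry kills both $\sum_r f(i_r)$ and $\sum_r 1/f(i_r)$, but the outer-vertex check does not. Once you replace the linear relation by the correct reciprocal one, the telescoping verification and the seam checks go through, and the rest of your outline is fine.
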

	
		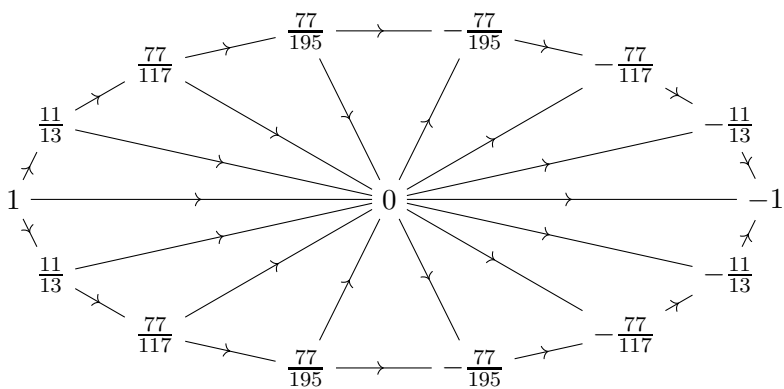
\begin{figure}[ht]
				\centering
				\begin{tikzpicture}[xscale=5,yscale=2.3]
\tikzset{->-/.style={decoration={
			markings,
			mark=at position .5 with {\arrow{>}}},postaction={decorate}}}
\node (a) at (0:1) {$-1$};
\node (b) at (360/14:1) {$-\frac{11}{13}$};
\node (c) at (2*360/14:1) {$-\frac{77}{117}$};
\node (d) at (3*360/14:1) {$-\frac{77}{195}$};
\node (e) at (4*360/14:1) {$\frac{77}{195}$};
\node (f) at (5*360/14:1) {$\frac{77}{117}$};
\node (g) at (6*360/14:1) {$\frac{11}{13}$};
\node (h) at (7*360/14:1) {$1$};
\node (i) at (8*360/14:1) {$\frac{11}{13}$};
\node (j) at (9*360/14:1) {$\frac{77}{117}$};
\node (k) at (10*360/14:1) {$\frac{77}{195}$};
\node (l) at (11*360/14:1) {$-\frac{77}{195}$};
\node (m) at (12*360/14:1) {$-\frac{77}{117}$};
\node (n) at (13*360/14:1) {$-\frac{11}{13}$};
\node (o) at (0:0) {$0$};
\draw[->-] (b) -- (a);
\draw[->-] (c) -- (b);
\draw[->-] (d) -- (c);
\draw[->-] (e) -- (d);
\draw[->-] (f) -- (e);
\draw[->-] (g) -- (f);
\draw[->-] (h) -- (g);
\draw[->-] (h) -- (i);
\draw[->-] (i) -- (j);
\draw[->-] (j) -- (k);
\draw[->-] (k) -- (l);
\draw[->-] (l) -- (m);
\draw[->-] (m) -- (n);
\draw[->-] (n) -- (a);
\draw[->-] (o) -- (a);
\draw[->-] (o) -- (b);
\draw[->-] (o) -- (c);
\draw[->-] (o) -- (d);
\draw[->-] (o) -- (l);
\draw[->-] (o) -- (m);
\draw[->-] (o) -- (n);
\draw[->-] (e) -- (o);
\draw[->-] (f) -- (o);
\draw[->-] (g) -- (o);
\draw[->-] (h) -- (o);
\draw[->-] (i) -- (o);
\draw[->-] (j) -- (o);
\draw[->-] (k) -- (o);
\end{tikzpicture}
			\caption{A network with vertices labeled by the values of the function $f$ defined in the proof of Proposition \ref{prop:wheel} and edges oriented according to the associated compatible orientation.}
			\label{fig:wheel}
		\end{figure}
\section*{Acknowledgments}

\noindent The author thanks Trevor Hyde and Jeffrey Lagarias for helpful comments on drafts of the paper.

\bibliographystyle{abbrv}
\bibliography{lutz-ENAHA}
\end{document}